\newcommand{\lrpar}[1]{\left( #1 \right)}
\newcommand{\lrbra}[1]{\left[ #1 \right]}
\newcommand{\lrcubra}[1]{\left\{ #1 \right\}}
\newcommand{\ie}{\text{i.e.,}~}
\newcommand{\eg}{\text{e.g.,}~}
\newcommand{\seg}{\text{see, e.g.,}~}
\newcommand{\see}{\text{see}~}
\newcommand{\wrt}{\text{w.r.t.}~}
\newcommand{\Eqref}[1]{Eq.~(\ref{#1})}
\newcommand{\Secref}[1]{Section~\ref{#1}}
\newcommand{\Assuref}[1]{Assumption~\ref{#1}}
\newcommand{\thmref}[1]{Theorem~\ref{#1}}
\newcommand{\cororef}[1]{Corollary~\ref{#1}}
\newcommand{\defref}[1]{Definition~\ref{#1}}
\newcommand{\propref}[1]{Proposition~\ref{#1}}
\newcommand{\lmeref}[1]{Lemma~\ref{#1}}
\newcommand{\E}[1]{\mathbb{E}\left[ #1 \right]}
\newcommand{\condE}[2]{\mathbb{E}_{#1}\lrbra{#2}}
\newcommand{\V}[1]{\mathbb{V}\left( #1 \right)}
\newcommand{\Cov}[1]{\textrm{Cov}\left( #1 \right)}
\newcommand{\Prob}[1]{\mathbb{P} \left( #1 \right)}
\newcommand{\prob}{\mathbb{P}}
\newcommand{\indep}{\perp \!\!\! \perp}
\newcommand{\eqdef}{:=}
\newcommand{\iprod}[1]{\left\langle #1 \right\rangle}
\newcommand{\norm}[1]{\left\lVert #1 \right \rVert}
\newcommand{\absval}[1]{\left| #1 \right|}
\newcommand{\Ran}[1]{\textrm{Ran}\left( #1 \right)}
\newcommand{\Ker}[1]{\textrm{Ker}\left( #1 \right)}
\newcommand{\calB}{\mathcal{B}}
\newcommand{\calE}{\mathcal{E}}
\newcommand{\calC}{\mathcal{C}}
\newcommand{\calM}{\mathcal{M}}
\newcommand{\calF}{\mathcal{F}}
\newcommand{\calH}{\mathcal{H}}
\newcommand{\calU}{\mathcal{U}}
\newcommand{\tildeX}{\widetilde{X}}
\newcommand{\tildeG}{\widetilde{G}}
\newcommand{\R}{\mathbb{R}}
\newcommand{\Ebb}{\mathbb{E}}
\newcommand{\Mbb}{\mathbb{M}}
\newcommand{\hL}{\mathbb{L}^2}
\newcommand{\hLs}[1]{\hL\lrpar{\sigma_{#1}}}
\newcommand{\hLz}[1]{\hL_0\lrpar{\sigma_{#1}}}
\newcommand{\cz}[1]{c_0\lrpar{#1}}
\newcommand{\cd}[1]{c\lrpar{#1}}
\newcommand{\pset}[1]{\mathcal{P}_{#1}}
\newcommand{\perpa}[1]{{\perp_{#1}}}
\newcommand{\DeltA}[1]{\Delta\vert_{#1}}
\newcommand{\bigplus}{%
  \DOTSB\mathop{\mathpalette\mattos@bigplus\relax}\slimits@
}
\newcommand\mattos@bigplus[2]{%
  \vcenter{\hbox{%
    \sbox\z@{$#1\sum$}%
    \resizebox{!}{0.9\dimexpr\ht\z@+\dp\z@}{\raisebox{\depth}{$\m@th#1+$}}%
  }}%
  \vphantom{\sum}%
}
\theoremstyle{plain}
\newtheorem{lme}{Lemma}
\newtheorem{coro}{Corollary}
\newtheorem{prop}{Proposition}
\newtheorem{thm}{Theorem}
\newtheorem{dfi}{Definition}
\theoremstyle{remark}
\newtheorem{assu}{Assumption}
\newtheorem{rmk}{Remark}
\def\ps@pprintTitle{%
\let\@oddhead\@empty
\let\@evenhead\@empty
\def\@oddfoot{}%
\let\@evenfoot\@oddfoot}
\definecolor{backcolour}{rgb}{0.95,0.95,0.92}
\DeclareFontFamily{U}{mathx}{}
\DeclareFontShape{U}{mathx}{m}{n}{<-> mathx10}{}
\DeclareSymbolFont{mathx}{U}{mathx}{m}{n}
\DeclareMathAccent{\widecheck}{0}{mathx}{"71}
\begin{document}
\begin{frontmatter}

\title{Hoeffding decomposition of black-box models with dependent inputs}


\author[a,b,c,e]{Marouane Il Idrissi}
\author[a,b,d]{Nicolas Bousquet}
\author[c]{Fabrice Gamboa}
\author[a,b,c]{Bertrand Iooss}
\author[c]{Jean-Michel Loubes}

\address[a]{EDF Lab Chatou, 6 Quai Watier, 78401 Chatou, France}
\address[b]{SINCLAIR AI Laboratory, Saclay, France}
\address[c]{Institut de Mathématiques de Toulouse, 31062 Toulouse, France}
\address[d]{Sorbonne Université, LPSM, 4 place Jussieu, Paris, France}
\address[e]{Corresponding Author - Email: ilidrissi.m@gmail.com}

\begin{abstract}
Performing an additive decomposition of arbitrary functions of random elements is paramount for global sensitivity analysis and, therefore, the interpretation of black-box models. The well-known seminal work of Hoeffding characterized the summands in such a decomposition in the particular case of mutually independent inputs. Going beyond the framework of independent inputs has been an ongoing challenge in the literature.  Existing solutions have so far required constraining assumptions or suffer from a lack of interpretability.  In this paper, we generalize Hoeffding's decomposition for dependent inputs under very mild conditions. For that purpose, we propose a novel framework to handle dependencies based on probability theory, functional analysis, and combinatorics. It allows for characterizing two reasonable assumptions on the dependence structure of the inputs: non-perfect functional dependence and non-degenerate stochastic dependence. We then show that any square-integrable, real-valued function of random elements respecting these two assumptions can be uniquely additively decomposed and offer a characterization of the summands using oblique projections. We then introduce and discuss the theoretical properties and practical benefits of the sensitivity indices that ensue from this decomposition. Finally, the decomposition is analytically illustrated on bivariate functions of Bernoulli inputs.
\end{abstract}


 \begin{keyword}
 Hoeffding's decomposition ; oblique projection ; black-box model ; interpretability ; variance decomposition ; uncertainty quantification ; dependent inputs ; interaction effects
\end{keyword}
\end{frontmatter}
\section{Introduction}

For a positive integer $d$, let $X=(X_1, \dots, X_d)$, be an $E$-valued random element, where $E$ is a cartesian product of Polish spaces, and let $G : E \rightarrow \R$. \cite{Hoeffding1948} showed that, as long as $\E{G(X)^2} < \infty$ and that the vector $X$ is comprised of mutually independent elements, then $G(X)$ can be uniquely decomposed as
\begin{equation}
    G(X) = \sum_{A \in \pset{D}} G_A(X_A), ~\text{such that for any}~ A \in \pset{D},~G_A(X_A) = \sum_{B \in \pset{A}} (-1)^{|A|-|B|}\Ebb_B\lrbra{G(X)},
    \label{eq:HDMR}
\end{equation}
where $D = \lrcubra{1,\dots,d}$, $\pset{D}$ is the power-set of $D$, and for $A\in \pset{D},~ \Ebb_A\lrbra{\cdot} = \E{\cdot \mid X_A}$ is the conditional expectation operator. This decomposition, also known as a \emph{high-dimensional model representations} (HDMR) \citep{Rabitz1999}. is at the cornerstone of many fields such as sensitivity analysis (SA) \citep{DaVeiga2021}, explainability in artificial intelligence (XAI) \citep{Barredo2020}, and more broadly, the uncertainty quantification (UQ) of critical systems for industrial practices \citep{deRocquigny2008, Ghanem2017}.

In these fields, one of the main and most pressing challenges is to deal with dependent inputs \citep{Razavi2021}, as the wide majority of use-cases do not exhibit independent inputs. However, the most popular methods often assume mutual independence \citep{Sobol2001, Lundberg2017}, either for the simplicity of the resulting estimation schemes or due to the lack of a proper framework. Always assuming mutual independence in practice can be seen as expedient and can lead to improper insights \citep{Hart2018}.

To adapt the decomposition in \Eqref{eq:HDMR} for dependent inputs, many approaches have been proposed in the literature. Notably, \cite{Hart2018} proposed an approximation theoretic framework and provides valuable tools for importance quantification, but their approach lacks a clear and intuitive explanation of the quantities being estimated. On the one hand, in \cite{Chastaing2012}, the authors approached the problem differently and brought forward an intuitive view on the subject, but under significantly limiting assumptions on the probabilistic structure of the inputs. On the other hand, \cite{Hooker2007} and \cite{Kuo2009} proposed a projection-based approach under constraints derived from desirability criteria. \cite{Peccati2004} proposed a generalization under input exchangeability. However, these approaches do not offer a complete picture of how uncertainties behave under input dependencies. Other approaches rely on transforming the dependent inputs to achieve mutual independence using, \eg Nataf or Rosenblatt transforms \citep{Lebrun2009a, Lebrun2009b}. For instance, \cite{Mara2015} offers meaningful insights into the interactions between the inputs induced by the model $G$ and the dependence, but in a restrictive setting.

To fill these gaps, we propose to tackle this generalization through the prism of an original framework at the cornerstone of probability theory, functional analysis, and abstract algebra. By viewing random variables as measurable functions, we prove that a unique decomposition such as in \eqref{eq:HDMR}, for square-integrable black-box outputs $G(X)$, is indeed possible under two fairly reasonable assumptions on the inputs:
\begin{enumerate}
    \item Non-perfect functional dependence;
    \item Non-degenerate stochastic dependence.
\end{enumerate}

More precisely, denote by $\sigma_X$ the $\sigma$-algebra generated by $X$, and $\hLs{X}$ the Lebesgue space of square-integrable $\sigma_X$-measurable real-valued random variables. We effectively show that under the previously stated assumptions, the direct-sum decomposition
$$\hLs{X} = \bigoplus_{A \in \pset{D}} V_A,$$
is valid, where $V_A$ are linear vector subspaces of functions of $X_A$, which are completely characterized. To our knowledge, it offers a novel approach to multivariate dependencies, relying on geometric considerations. We also show that Hoeffding's classical decomposition is a special case of our proposed framework. 

Furthermore, we introduce novel sensitivity indices based on this decomposition. In particular, it allows defining theoretically justified model evaluation decompositions. In addition, we also propose four indices for quantifying the input importance from the resulting variance decomposition of $G(X)$. They offer a clear and intuitive way to measure the effects of interactions and the effects of dependencies.

This article is organized as follows. \Secref{sec:preli} is dedicated to introducing the overall framework, notations, and preliminary results. \Secref{sec:proof} is dedicated to proving the main result of this paper. \Secref{sec:Obs} is dedicated to discussing some of the consequences of this novel decomposition. In \Secref{sec:decomps}, we apply the model's decomposition for XAI and global SA purposes. \Secref{sec:illustration} is dedicated to illustrating our result in the particular case of a model with two Bernoulli inputs. Finally, \Secref{sec:conclu} discusses the challenges for a broad acceptance of the proposed method in practice, as well as some motivating perspectives.

\section{Framework and preliminaries}\label{sec:preli}

In the remainder of this document, the following notations are adopted. $\subset$ indicates a proper (strict) inclusion, while $\subseteq$ indicates a possible equality between sets. The positive integer $d$ denotes the number of inputs (\ie covariates), and $D \eqdef \{1,\dots, d\}$. ``Random element'' is used if the domain of measurable functions is not necessarily $\R$ or $\R^d$. The term ``random variables'' is exclusive to \emph{real-valued random elements}, and ``random vector'' is exclusive to a vector of random variables. Independence between random elements is denoted using $\indep$. For a measurable space $(E, \calE)$ and $B \subset \calE$, we denote by $\sigma\lrbra{B}$ the smallest $\sigma$-algebra containing $B$. For a finite set, \eg $D$, denote by $\pset{D}$ its power-set (\ie the set of subsets of $D$, including $D$ and $\emptyset$), and for any $A \in \pset{D}$, denote $\pset{-A} = \pset{A} \setminus \{A\}$ (\ie the power-set of $A$ without $A$). Depending on the context, when dealing with a set $A$, $\absval{A}$ denotes its cardinality (\ie the number of elements in $A$), while for a scalar $c \in \R$, $\absval{c}$ denotes its absolute value.

\subsection{Random inputs}
Let $(\Omega, \calF, \prob)$ be a probability space, and let $(E_1, \calE_1), \dots, (E_d, \calE_d)$ be a collection of standard Borel measurable spaces. For every $A \in \pset{D}, A \neq \emptyset$, denote:
$$E_A \eqdef \bigtimes_{i \in A} E_i, \quad \calE_A \eqdef \bigotimes_{i \in A} \calE_i,$$
where $\times$ denotes the Cartesian product between sets and $\otimes$ denotes the product of $\sigma$-algebras. Denote $E \eqdef E_D$ and $\calE \eqdef \calE_D$.

\emph{Random inputs} refer to $E$-valued, ($\calF$)-measurable mappings, and are denoted $X = (X_1, \dots, X_d)$. For random inputs $X$, the \emph{subsets of the inputs} refer to, for every $A \subset D$, the $E_A$-valued random elements defined as $X_A \eqdef (X_i)_{i \in A}$.

Denote by $\sigma_{\emptyset}$ the $\prob$-trivial $\sigma$-algebra, \ie $\sigma_{\emptyset} \eqdef \sigma \lrbra{\lrcubra{A \in \calF : \Prob{A} =0}}$, which is the smallest $\sigma$-algebra containing every null sets of $\calF$ \wrt $\prob$. Denote by $\sigma_X$ the $\sigma$-algebra generated by $X$. For every $A \in \pset{D}, A \not \in \lrcubra{\emptyset, D}$, let $\sigma_A$ be the $\sigma$-algebra generated by the subset of inputs $X_A$.

Throughout this paper, it is assumed that:
    \begin{enumerate}
        \item For every $i \in D$, $\sigma_{\emptyset} \subset \sigma_i$, \ie the $\prob$-trivial $\sigma$-algebra is strictly contained in the $\sigma$-algebras generated by individual inputs;
        \item For every $A,B \in \pset{D}$ such that $B \subset A$, $\sigma_B \subset \sigma_A$, \ie the $\sigma$-algebra generated by a subset of inputs is necessarily strictly contained in the $\sigma$-algebras generated by a bigger subset of inputs;
    \end{enumerate}
    
\begin{rmk}
    While (1.) is standard in many probabilistic theoretical frameworks (\seg \cite{Sidak1957}), (2.) is less standard but remains reasonable since it avoids redundancy between subsets of inputs.
\end{rmk}

\subsection{Space of square-integrable outputs and its subspaces}
Denote $\hL(\calF)$ the Lebesgue space of $\R$-valued square-integrable $\calF$-measurable functions on $\lrpar{\Omega, \calF, \prob}$. For any sub $\sigma$-algebra $\calB \subseteq \calF$, denote $\hL\lrpar{\calB}$ the subspace of $\hL(\calF)$ containing $\calB$-measurable functions. $\hL_0\lrpar{\calB}$ denotes the subspace of centered elements of $\hL\lrpar{\calB}$. Let us recall classical result \citep[Theorem 2]{Sidak1957}:
\begin{thm}
    For two sub $\sigma$-algebras $\calB_1$ and $\calB_2$ of $\calF$, the following assertions hold.
    \begin{enumerate}
        \item If $\calB_1 \subseteq \calB_2$, then $\hL\lrpar{\calB_1} \subseteq \hL\lrpar{\calB_2}$.
        \item $\hL\lrpar{\calB_1} \cap \hL\lrpar{\calB_2} = \hL \lrpar{\calB_1 \cap \calB_2}$.
    \end{enumerate}
    \label{thm:sidak}
\end{thm}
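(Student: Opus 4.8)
The plan is to dispatch assertion (1) and the easy half of (2) immediately, and then concentrate the effort on the reverse inclusion in (2).

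For (1): if $\calB_1 \subseteq \calB_2$ then every $\calB_1$-measurable function is $\calB_2$-measurable, while square-integrability depends only on $\prob$ and not on the chosen $\sigma$-algebra; passing to equivalence classes up to $\prob$-a.s.\ equality yields $\hL(\calB_1) \subseteq \hL(\calB_2)$. The inclusion $\hL(\calB_1 \cap \calB_2) \subseteq \hL(\calB_1) \cap \hL(\calB_2)$ in (2) is then obtained by applying (1) to the two set inclusions $\calB_1 \cap \calB_2 \subseteq \calB_1$ and $\calB_1 \cap \calB_2 \subseteq \calB_2$.

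The substance is the reverse inclusion $\hL(\calB_1) \cap \hL(\calB_2) \subseteq \hL(\calB_1 \cap \calB_2)$. First I would fix $h$ in the left-hand side and choose a $\calB_1$-measurable representative $h_1$ and a $\calB_2$-measurable representative $h_2$, so that $h_1 = h_2$ $\prob$-a.s. Then I would observe that, since all the $\sigma$-algebras in play contain the $\prob$-negligible sets (which holds for every $\sigma_A$ in this paper because $\sigma_\emptyset \subseteq \sigma_A$), modifying the $\calB_2$-measurable function $h_2$ on the $\prob$-null set $\{h_1 \neq h_2\}$ preserves $\calB_2$-measurability; hence $h_1$ is itself $\calB_2$-measurable, i.e.\ $h_1$ is one representative of $h$ that is measurable with respect to both $\calB_1$ and $\calB_2$. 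Finally, for such a function $\{h_1 \le t\} \in \calB_1 \cap \calB_2$ for every $t \in \R$; these level sets generate $\sigma(h_1)$, so $\sigma(h_1) \subseteq \calB_1 \cap \calB_2$, and since $h_1$ is square-integrable this places $h = [h_1]$ in $\hL(\calB_1 \cap \calB_2)$, completing (2).

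The one genuinely delicate step is the middle one. Because $\hL(\cdot)$ consists of equivalence classes up to $\prob$-a.s.\ equality, membership of $h$ in $\hL(\calB_1) \cap \hL(\calB_2)$ a priori only furnishes a $\calB_1$-measurable representative \emph{and} a (possibly different) $\calB_2$-measurable representative, and the whole point is to replace them by a single common one; this is exactly where $\prob$-completeness of the $\sigma$-algebras is used, and without it the identity can in fact fail. An alternative route I could take instead stays in the Hilbert space $\hL(\calF)$ with the orthogonal projections $\E{\cdot \mid \calB_i}$: the tower property factors $\E{\cdot \mid \calB_1 \cap \calB_2}$ through either $\E{\cdot \mid \calB_i}$, and combining von Neumann's alternating-projection theorem with the convergence of iterated conditional expectation operators identifies the orthogonal projection onto $\hL(\calB_1) \cap \hL(\calB_2)$ with $\E{\cdot \mid \calB_1 \cap \calB_2}$, whose range is $\hL(\calB_1 \cap \calB_2)$; but the elementary measure-theoretic argument above is shorter and I would prefer it.
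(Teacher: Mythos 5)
The paper does not actually prove \thmref{thm:sidak}: it is recalled as a classical result with a citation to \cite{Sidak1957}, so there is no internal argument to compare yours against, and your self-contained proof fills a real gap. Your argument is correct. Assertion (1) and the inclusion $\hL\lrpar{\calB_1\cap\calB_2}\subseteq\hL\lrpar{\calB_1}\cap\hL\lrpar{\calB_2}$ are immediate, as you say, and your treatment of the reverse inclusion isolates the genuine issue: for spaces of a.s.-equivalence classes, assertion (2) is in fact false for completely arbitrary sub-$\sigma$-algebras. For instance, take $\Omega=\lrcubra{1,2,3}$ with $\prob\lrpar{\lrcubra{1}}=0$, $\prob\lrpar{\lrcubra{2}}=\prob\lrpar{\lrcubra{3}}=1/2$, $\calB_1=\lrcubra{\emptyset,\lrcubra{3},\lrcubra{1,2},\Omega}$ and $\calB_2=\lrcubra{\emptyset,\lrcubra{2},\lrcubra{1,3},\Omega}$: then $\calB_1\cap\calB_2=\lrcubra{\emptyset,\Omega}$, yet the class of $\mathds{1}_{\lrcubra{3}}$ lies in $\hL\lrpar{\calB_1}\cap\hL\lrpar{\calB_2}$ (it has the $\calB_2$-measurable representative $\mathds{1}_{\lrcubra{1,3}}$) while it has no a.s.-constant representative. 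So the hypothesis you invoke, namely $\sigma_{\emptyset}\subseteq\calB_i$ (both $\sigma$-algebras contain the $\prob$-null sets), is not cosmetic but necessary; it does hold for every $\sigma_A$ of the paper thanks to the standing assumptions of \Secref{sec:preli}, so the theorem is legitimately applied there, but strictly speaking the statement should carry this hypothesis. Granting it, your common-representative step is sound: $\lrcubra{h_1\neq h_2}$ and all its $\calF$-measurable subsets are $\prob$-null, hence belong to $\calB_2$, so each level set $\lrcubra{h_1\le t}$ lies in $\calB_1\cap\calB_2$ and $h_1$ is a square-integrable $\lrpar{\calB_1\cap\calB_2}$-measurable representative of $h$. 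One caution on the alternative you sketch: identifying the von Neumann limit of alternated conditional expectations with $\Ebb\lrbra{\cdot\mid\calB_1\cap\calB_2}$ is itself usually established under the same null-set hypothesis and essentially contains assertion (2), so that route is close to circular; your elementary measure-theoretic argument is the better choice.
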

Thanks to the Doob-Dynkin Lemma, it is obvious that the Lebesgue space $\hLs{X}$ only contains random variables that are functions of $X$. Additionally, notice that $\hLs{\emptyset}$ only contains constant a.s. functions \citep[Lemma 4.9]{Kallenberg2021}. Moreover, for every $A \in \pset{D}$, the Lebesgue spaces $\hLs{A}$ are Hilbert spaces \citep[Theorem 9.4.1]{Malliavin1995} and are nested following the partial order of $\pset{D}$. In the following, for every $A \in \pset{D}$, the closed subspace $\hLs{A}$ of $\hLs{X}$ is referred to as \emph{the Lebesgue space generated by $X_A$}.

\subsection{Angles between subspaces}

We introduce two angles between closed subspaces of a Hilbert space, which are paramount in the upcoming developments: Dixmier's angle and Friedrichs' angle.

Dixmier's angle \citep{Dixmier1949} is the \emph{minimal angle} between two closed subspaces of a Hilbert space, and its cosine is defined as:
\begin{dfi}[Dixmier's angle]
    Let $H$ and $K$ be closed subspaces of a Hilbert space $\calH$ with inner product $\iprod{\cdot,\cdot}$ and norm $\norm{\cdot}$. The cosine of Dixmier's angle is defined as
    $$\cz{H,K} \eqdef \sup \lrcubra{\absval{\iprod{x,y}} : x\in H, \norm{x} \leq 1, \quad y\in K, \norm{y}\leq 1}.$$
\end{dfi}
In probability theory, this angle is analogous to the \emph{maximal correlation} \citep{Gebelein1941}, and has been extensively studied in the literature (\seg \cite{Renyi1959, Koyak1987, Doukhan1994, Dembo2001, Dauxois1997}). Additionally, note that a maximal correlation of $0$ between two random elements is equivalent to their independence \wrt $\prob$ (\see \cite[Chapter 3]{Malliavin1995}).

Friedrichs' angle \citep{Friedrichs1937} differs from Dixmier's angle in one way: the intersection of the two subspaces is disregarded. Its cosine is defined as:
\begin{dfi}[Friedrichs' angle]
    Let $H$ and $K$ be closed subspaces of a Hilbert space $\calH$ with inner product $\iprod{\cdot , \cdot}$ and norm $\norm{\cdot}$. The cosine of Friedrichs' angle is defined as
    $$\cd{H,K} \eqdef \sup \lrcubra{\absval{\iprod{x,y}} : 
    \begin{cases}
        x\in H \cap \lrpar{H \cap K}^\perp, \norm{x} \leq 1 \\
        y\in K \cap \lrpar{H \cap K}^\perp, \norm{y}\leq 1
    \end{cases}},$$
    where the orthogonal complement is taken \wrt to $\calH$.
    \label{def:friedAng}
\end{dfi}
In probability theory, this quantity is analogous to the \emph{maximal partial (or relative) correlation} \citep{Bryc1984, Bryc1996, Dauxois2004}. It offers a sufficient and necessary condition for the commutativity of conditional expectations \wrt $\sigma$-algebras (\see \cite{Kallenberg2021}, Theorems 8.13 and 8.14). 

\begin{rmk}
In the following, any reference to Friedrichs' or Dixmier's angle refers to the \emph{cosine of the angle} (taking values in $[0,1]$) instead of the angle itself (which takes its values in $[0, \pi/2]$).
\end{rmk}

In functional analysis, these two angles are often used to assess the closedness of sums of closed subspaces of a Hilbert space. The interested reader is referred to the work of \cite{Deutsch1995} for a more complete overview of their properties. We recall the relevant properties for the upcoming developments. 

\begin{lme}
    Let $H,K$ be closed subspaces of a Hilbert space $\calH$. 
    \begin{enumerate}
        \item $0 \leq \cd{H,K} \leq \cz{H,K} \leq 1$;
        \item For any $x \in H$ and any $y \in K$:
            \begin{equation}
                \absval{\iprod{x,y}} \leq \cz{H,K} \norm{x}\norm{y};
                \label{eq:sharpCS}
            \end{equation}
        \item For a proper closed subspace $\widetilde{H} \subset H$, $\cz{\widetilde{H}, K} \leq \cz{H,K}$;
        \item $\cz{H,K} <1$ $\iff$ $H \cap K = \{0\}$ and $H+K$ is closed in $\calH$;
        \item $\cd{H,K} <1 $ $\iff$ $H+K$ is closed in $\calH$;
        \item $\cd{H,K} = \cz{H \cap \lrpar{H \cap K}^\perp, K} = \cz{H, K\cap \lrpar{H \cap K}^\perp}$;
        \item If $H\cap K = \{0\}$, then $\cd{H,K} = \cz{H,K}$.
    \end{enumerate}
   \label{lme:relaAngles}
\end{lme}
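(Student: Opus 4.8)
The plan is to obtain items (1)--(3) and (6)--(7) by directly unwinding the two suprema, to invoke (or reprove) the classical functional-analytic fact (5), and to deduce (4) from (5) together with (7). For (1), the constraint set $\lrcubra{x\in H\cap(H\cap K)^\perp,\ \norm{x}\le 1}\times\lrcubra{y\in K\cap(H\cap K)^\perp,\ \norm{y}\le 1}$ over which $\cd{H,K}$ is a supremum is contained in $\lrcubra{x\in H,\ \norm{x}\le 1}\times\lrcubra{y\in K,\ \norm{y}\le 1}$, the set for $\cz{H,K}$, so $\cd{H,K}\le\cz{H,K}$; the bound $\cz{H,K}\le 1$ is Cauchy--Schwarz, and both are suprema of the nonnegative map $(x,y)\mapsto\absval{\iprod{x,y}}$ over a set containing $(0,0)$, hence nonnegative. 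For (2), if $x=0$ or $y=0$ the claim is trivial, and otherwise $x/\norm{x}$ and $y/\norm{y}$ are admissible unit vectors in the supremum defining $\cz{H,K}$, so $\absval{\iprod{x/\norm{x},y/\norm{y}}}\le\cz{H,K}$, which is \eqref{eq:sharpCS} after multiplying through by $\norm{x}\norm{y}$. For (3), $\widetilde H\subseteq H$ forces the constraint set for $\cz{\widetilde H,K}$ to lie inside that for $\cz{H,K}$, so the supremum cannot increase.

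For (6), put $M=H\cap K$, a closed subspace of the Hilbert space $K$; every $y\in K$ splits orthogonally as $y=y_M+y'$ with $y_M\in M$, $y'\in K\cap M^\perp$, and $\norm{y'}\le\norm{y}$. If $x\in H\cap M^\perp$ then $x\perp y_M$, so $\iprod{x,y}=\iprod{x,y'}$; hence the supremum over $y\in K$, $\norm{y}\le 1$, coincides with the supremum over $y'\in K\cap M^\perp$, $\norm{y'}\le 1$, which gives $\cz{H\cap M^\perp,K}=\cd{H,K}$, the second identity being symmetric. Item (7) is then immediate: if $H\cap K=\lrcubra{0}$ then $M^\perp=\calH$, so $H\cap M^\perp=H$ and $K\cap M^\perp=K$, and the definitions (or (6)) give $\cd{H,K}=\cz{H,K}$.

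The one substantive item is (5), that $\cd{H,K}<1$ iff $H+K$ is closed, which is classical (\seg \cite{Deutsch1995}); I would either cite it or argue as follows. First reduce to the case $H\cap K=\lrcubra{0}$ by working in $M^\perp$ with $\widetilde H=H\cap M^\perp$, $\widetilde K=K\cap M^\perp$: one checks $\widetilde H\cap\widetilde K=\lrcubra{0}$, that $H+K=M+(\widetilde H+\widetilde K)$ with $M$ orthogonal to $\widetilde H+\widetilde K$ so that $H+K$ is closed iff $\widetilde H+\widetilde K$ is, and that $\cd{H,K}=\cz{\widetilde H,\widetilde K}=\cd{\widetilde H,\widetilde K}$ by the definition of Friedrichs' angle and (7). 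Renaming the reduced spaces $H,K$ and writing $c\eqdef\cd{H,K}=\cz{H,K}$: if $c<1$, then for $h\in H$, $k\in K$, using \eqref{eq:sharpCS} and $2\norm{h}\norm{k}\le\norm{h}^2+\norm{k}^2$ one gets $\norm{h+k}^2\ge\norm{h}^2+\norm{k}^2-2c\norm{h}\norm{k}\ge(1-c)\lrpar{\norm{h}^2+\norm{k}^2}$, so $(h,k)\mapsto h+k$ is bounded below from $H\times K$ into $\calH$ and its range $H+K$ is closed; conversely, if $c=1$ and $H+K$ were closed, the open mapping theorem applied to the continuous linear bijection $(h,k)\mapsto h+k:H\times K\to H+K$ would yield $\norm{h}+\norm{k}\le C\norm{h+k}$, whereas unit vectors $h_n\in H$, $k_n\in K$ with $\iprod{h_n,k_n}\to 1$ (available since $\cz{H,K}=1$, after a unimodular rotation of $k_n$) satisfy $\norm{h_n-k_n}^2=2-2\iprod{h_n,k_n}\to 0$, forcing $2=\norm{h_n}+\norm{-k_n}\le C\norm{h_n-k_n}\to 0$, a contradiction. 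Finally (4) follows by combining these: if $\cz{H,K}<1$ then $H\cap K=\lrcubra{0}$ (a nonzero $z\in H\cap K$ would give $\cz{H,K}\ge\iprod{z/\norm{z},z/\norm{z}}=1$), hence $\cd{H,K}=\cz{H,K}<1$ by (7) and $H+K$ is closed by (5); conversely $H\cap K=\lrcubra{0}$ and $H+K$ closed give $\cz{H,K}=\cd{H,K}<1$ by (7) and (5).

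The only real obstacle is (5) --- precisely the reduction to trivial intersection and the ``$\Leftarrow$'' implication, where genuine functional analysis (the open mapping theorem, equivalently a coercivity estimate) enters; every other item is a formal manipulation of the defining suprema, and (4) is purely a corollary of (5) and (7).
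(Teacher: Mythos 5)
Your proposal is correct, but it differs from the paper in that the paper does not prove Lemma~\ref{lme:relaAngles} at all: these are recalled as classical properties of the Dixmier and Friedrichs angles, with a pointer to the survey of Deutsch (1995), where items (4)--(7) appear as known theorems. What you supply is essentially a self-contained reconstruction of that classical material, and it holds up: items (1)--(3) and (6)--(7) follow, as you say, by unwinding the two suprema (the inclusion of constraint sets, Cauchy--Schwarz, the scaling argument for \eqref{eq:sharpCS}, and the orthogonal splitting $y=y_M+y'$ of $K$ along $M=H\cap K$, which kills the $M$-component against $x\in H\cap M^{\perp}$); item (5) is the only substantive point, and your treatment is the standard one --- reduce to trivial intersection via $\widetilde H=H\cap M^{\perp}$, $\widetilde K=K\cap M^{\perp}$, noting $H+K=M\oplus(\widetilde H+\widetilde K)$ orthogonally so closedness transfers both ways, then get closedness from the coercivity bound $\norm{h+k}^{2}\geq(1-c)\lrpar{\norm{h}^{2}+\norm{k}^{2}}$ when $c<1$, and the converse by the open mapping theorem applied to $(h,k)\mapsto h+k$ against a sequence of (near-)unit vectors with $\iprod{h_n,k_n}\to 1$; item (4) then really is a corollary of (5) and (7). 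The only gloss worth tightening is the phrase ``unit vectors'' in the converse of (5): the supremum is over vectors of norm at most one, but if $\absval{\iprod{h_n,k_n}}\to 1$ with $\norm{h_n},\norm{k_n}\leq 1$ then both norms tend to $1$ and the same contradiction $\norm{h_n}+\norm{k_n}\to 2$ versus $\norm{h_n-k_n}\to 0$ goes through, so nothing breaks. In short: the paper buys brevity by citation; your version buys self-containedness at the cost of reproving Deutsch's results, and both are legitimate.
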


\subsection{Feshchenko matrix}
As illustrated above, the maximal and partial correlation are good candidates to control the dependence structure of random elements. They can be understood as a generalization to random elements of the correlation and partial correlation of random variables. Hence, they offer a natural avenue for generalizing to the notion of \emph{covariance and precision matrices}.

It is well-known that precision matrices (\ie inverses of covariance matrices) can be written using partial correlations (\seg \cite[p.129]{Lauritzen1996}). Generalized precision matrices are not new. In the study of \emph{graphical models}, generalized covariance and precision matrices have been used to study $\sigma$-algebras following a graph structure \citep{Loh2013}. 

We propose a novel approach, by substituting the partial correlation with Friedrichs' angle to define a generalization of precision matrices to random elements. It is defined as:
\begin{dfi}[Maximal coalitional precision matrix]
    Let $X = (X_1, \dots, X_d)$ be random inputs. The maximal coalitional precision matrix of $X$ is the $\lrpar{2^d \times 2^d}$ symmetric, set-indexed matrix $\Delta$, defined entry-wise, for any $A,B \in \pset{D}$, by
    $$\Delta(A,B) = \begin{cases}
        1 & \text{ if } A = B;\\
        -\cd{\hLs{A}, \hLs{B}} & \text{otherwise.}
    \end{cases}$$
    Moreover denote by $\DeltA{A}$ the principal $\lrpar{2^{|A|}-1 \times 2^{|A|}-1}$ submatrix of $\Delta$ relative to the proper subsets of $A \in \pset{D}$.
    \label{def:MCPM}
\end{dfi}
In functional analysis, an analogous matrix is used to derive a sufficient condition for sums of more than two closed subspaces of an abstract Hilbert space to be closed. The interested reader can refer to the pioneering works of Ivan Feshchenko \citep{Feshchenko2012, Feshchenko2020}. This matrix is primarily used for that purpose in the proof of our main theorem. In the following, the matrix $\Delta$ defined in \defref{def:MCPM} is referred to as the \emph{Feshchenko matrix}.

One particular and interesting aspect of the Feshchenko matrix of random inputs is that \emph{it is equal to the identity if and only if the inputs are mutually independent} (\see \Secref{sec:mutIndep}).

\subsection{Closure, complements, and projections}

A closed proper subspace $H$ of a Hilbert space $\calH$ is always \emph{complemented}, \ie there exist some subspace $K$ of $\calH$ such that $\calH$ admits the \emph{direct-sum decomposition} (\seg \cite[Definition 1.40 and Proposition 1.44]{Axler2015}) $\calH = H \oplus K$. The most well-known complement is probably the \emph{orthogonal complement} $H^\perp$ (\seg \cite[Theorem 12.4]{Rudin1996}) of $H$ in $\calH$. Let $A \in \pset{D}$ and let $B \subset A$, and for a subspace $H \subset \hLs{B}$, denote by $H^\perpa{A}$ the \emph{orthogonal complement of $H$ in $\hLs{A}$}.
\begin{lme} 
    Let $A,B \in \pset{D}$, such that $B \subseteq A$, and let $H$ be a subspace of $\hLs{B}$. Then
    $$H^\perpa{B} \subseteq H^\perpa{A}.$$
    \label{lme:perpas}
\end{lme}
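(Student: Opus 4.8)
The statement is essentially a matter of unwinding the two notions of orthogonal complement against a single ambient inner product, so the plan is short. First I would record that since $B \subseteq A$, \thmref{thm:sidak}(1) gives $\hLs{B} \subseteq \hLs{A}$; in particular $H \subseteq \hLs{B} \subseteq \hLs{A}$, so $H^{\perpa{B}}$ and $H^{\perpa{A}}$ are both well defined as orthogonal complements of the \emph{same} subspace $H$, taken inside the nested Hilbert spaces $\hLs{B} \subseteq \hLs{A}$ respectively. The crucial (and trivial) point to make explicit is that the inner product on $\hLs{A}$ is just the $\hL$-inner product on $(\Omega,\calF,\prob)$ restricted to $\hLs{A}$, and restricting it further to $\hLs{B}$ recovers the inner product of $\hLs{B}$; hence ``$\langle x , h\rangle = 0$'' is an unambiguous condition that does not depend on which of the two ambient spaces one works in.

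With that in place, the inclusion is immediate by chasing an arbitrary element: take $x \in H^{\perpa{B}}$. By definition $x \in \hLs{B}$ and $\iprod{x,h} = 0$ for every $h \in H$. Since $\hLs{B} \subseteq \hLs{A}$ we have $x \in \hLs{A}$, and the orthogonality relations $\iprod{x,h}=0$ for all $h \in H$ continue to hold verbatim (same inner product). Therefore $x \in H^{\perpa{A}}$, which proves $H^{\perpa{B}} \subseteq H^{\perpa{A}}$.

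There is no genuine obstacle here; the only thing worth being careful about is not to conflate $H^{\perpa{B}}$ with $H^{\perpa{A}}$ (they are generally \emph{not} equal, since the larger complement also contains, e.g., the part of $\hLs{A} \ominus \hLs{B}$), so the lemma is a one-sided inclusion and the proof should only invoke that $x$ remains orthogonal to $H$ after enlarging the ambient space — it should not attempt to say anything about elements of $H^{\perpa{A}}$ that lie outside $\hLs{B}$.
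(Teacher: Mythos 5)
Your proof is correct and follows essentially the same route as the paper: invoke \thmref{thm:sidak} to get $\hLs{B} \subseteq \hLs{A}$ and then conclude directly from the definition of the orthogonal complements, which you simply spell out via an explicit element-chase. The paper leaves that last step as ``a direct consequence of the definition,'' so your version is just a more detailed rendering of the same argument.
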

\begin{proof}[Proof of \lmeref{lme:perpas}]
    From \thmref{thm:sidak}, one has that $\hLs{B} \subseteq \hLs{A}$. The result is a direct consequence of the definition of orthogonal complements.
\end{proof}
\begin{rmk}
    It is important to note that the orthogonal complement is not necessarily the only complement of a closed proper subspace of a Hilbert space. Other (not necessarily orthogonal) complements can be found to respect the direct-sum decomposition of the ambient space.
\end{rmk}
For two Hilbert spaces $\lrpar{\calM_1, \norm{\cdot}_1}$ and $\lrpar{\calM_2, \norm{\cdot}_2}$, and a linear operator $T : \calM_1 \rightarrow \calM_2$ denote the \emph{range} of $T$ by $\Ran{T} \eqdef \lrcubra{T(x) : x \in \calM_1} \subseteq \calM_2$ and its \emph{nullspace} by $\Ker{T} \eqdef \lrcubra{x \in \calM_1 : T(x) = 0} \subseteq \calM_1$.

Let $\calH$ be a Hilbert space and $P : \calH \rightarrow \calH$ be a bounded linear operator. If $P$ is also idempotent (\ie $P \circ P = P$), then $\calH$ admits the direct sum decomposition $\calH = \Ran{P} \oplus \Ker{P}$ (\see \cite[Proposition 3.2]{Conway2007}), and $P$ is called the \emph{oblique projector} on $\Ran{P}$ parallel to $\Ker{P}$, defined as
\begin{align*}
    P : \calH = \Ran{P} \oplus \Ker{P} & \rightarrow \calH \\
    x = x_R + x_K & \mapsto x_R
\end{align*}
where $x_R \in \Ran{P}$ and $x_K \in \Ker{P}$. The operator $I-P$ is the oblique projection on $\Ker{P}$, parallel to $\Ran{P}$. If there are two closed subspaces $M$ and $N$ of a Hilbert space $\calH$ such that $\calH = M \oplus N$, then there exists a continuous idempotent operator (\ie an oblique projector) $P$ with range $\Ran{P} = M$ and $\Ker{P}=N$ (\see \cite[Theorem 7.90]{Galantai2004}). In this case, $P$ is known as the \emph{canonical projector} (\wrt to the direct sum decomposition $\calH = M \oplus N$). If $\Ker{P} = \Ran{P}^\perp$, then the projection is said to be \emph{orthogonal}, which is equivalent to $P$ being self-adjoint (\see \cite[Theorem 7.71]{Galantai2004}). The direct-sum decomposition, in this case, is justified thanks to Hilbert's projection theorem. 

It is well known that, for every $A \in \pset{D}$, the conditional expectation operators \wrt $\sigma_A$, denoted $\condE{A}{\cdot}$ are the \emph{orthogonal projectors} of elements of $\hLs{X}$ onto $\hLs{A}$ (parallel to $\hLs{A}^\perp$).

\section{Generalized Hoeffding decomposition}\label{sec:proof}

As explained in the introduction, our main result consists of showing that $\hLs{X} = \bigoplus_{A \in \pset{D}} V_A$, where each $V_A$ is a subspace of $\hLs{A}$. The generalization of Hoeffding's decomposition directly ensues from such a direct-sum decomposition. In this section, we show that such direct-sum decompositions are achievable under two reasonable assumptions on the inputs, which are less restrictive than mutual independence. These two assumptions are introduced and discussed before proving our main result.

\subsection{Non-perfect functional dependence}
Our first assumption can be formulated as a condition on the intersection of the $\sigma$-algebras generated by the subsets of inputs.
\begin{assu}[Non-perfect functional dependence]
    For any $A, B \in \pset{D}$, $\sigma_A \cap \sigma_B = \sigma_{A \cap B}$.
    \label{assu:a1}
\end{assu}
The name ``non-perfect functional dependence'' is justified by the following proposition.
\begin{prop}
    Let $X=(X_1, \dots, X_d)$ be inputs, and suppose that \Assuref{assu:a1} holds. Then, for any $A,B \in \pset{D}$ such that $A \cap B \not \in \{A,B\}$ (\ie the sets cannot be subsets of each other), there is no mapping $T : E_A \rightarrow E_B$ such that $X_B = T(X_A)$ a.s.
    \label{prop:funcDep}
\end{prop}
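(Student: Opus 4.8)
The plan is to argue by contrapositive: assume there exists a measurable $T : E_A \to E_B$ with $X_B = T(X_A)$ almost surely, and derive that $\sigma_A \cap \sigma_B \supsetneq \sigma_{A\cap B}$, contradicting \Assuref{assu:a1}. Since $A \cap B \notin \{A, B\}$, we have $A \cap B \subsetneq A$ and $A \cap B \subsetneq B$, so by the standing assumption (2.) in \Secref{sec:preli} the inclusions $\sigma_{A\cap B} \subsetneq \sigma_A$ and $\sigma_{A\cap B}\subsetneq\sigma_B$ are strict.

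First I would observe that the relation $X_B = T(X_A)$ a.s., together with the Doob--Dynkin lemma, forces $\sigma_B = \sigma(X_B) \subseteq \sigma(X_A) = \sigma_A$ up to $\prob$-null sets; more precisely $\sigma_B \subseteq \sigma_A \vee \sigma_\emptyset$. Intersecting with $\sigma_B$ gives $\sigma_A \cap \sigma_B = \sigma_B$ (modulo null sets). But \Assuref{assu:a1} says $\sigma_A \cap \sigma_B = \sigma_{A\cap B}$, so we would conclude $\sigma_B = \sigma_{A\cap B}$, contradicting the strict inclusion $\sigma_{A\cap B} \subsetneq \sigma_B$ noted above. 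Hence no such $T$ can exist. I would phrase this cleanly at the level of $\sigma$-algebras rather than manipulating $T$ directly, so that measurability of $T$ is used only through Doob--Dynkin.

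The main technical point to handle with care is the role of $\prob$-null sets: $X_B = T(X_A)$ holds only almost surely, so the inclusion $\sigma_B \subseteq \sigma_A$ is really an inclusion of $\prob$-completions, i.e. $\sigma_B \subseteq \sigma_A \vee \sigma_\emptyset$. I would want the standing framework to be read ``modulo $\sigma_\emptyset$'' (which is consistent with the definition of $\sigma_\emptyset$ and the convention $\sigma_\emptyset \subsetneq \sigma_i$), so that the strict inclusion $\sigma_{A\cap B}\subsetneq\sigma_B$ is a strict inclusion of augmented $\sigma$-algebras and the contradiction genuinely goes through. Apart from this bookkeeping, the argument is short; the only genuine input is Doob--Dynkin plus standing assumption (2.), and I do not anticipate a serious obstacle beyond stating the null-set convention precisely.
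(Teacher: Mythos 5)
Your argument is correct and follows essentially the same route as the paper's own proof: assume such a $T$ exists, deduce $\sigma_B \subseteq \sigma_A$ and hence $\sigma_A \cap \sigma_B = \sigma_B$, then contradict \Assuref{assu:a1} via the strict inclusion $\sigma_{A \cap B} \subset \sigma_B$ guaranteed by standing assumption (2.). Your added care about the a.s.\ equality (invoking Doob--Dynkin and working modulo the $\prob$-trivial $\sigma$-algebra $\sigma_{\emptyset}$) is a refinement the paper's proof passes over silently, but it does not change the argument.
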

\begin{proof}[Proof of \propref{prop:funcDep}]
    Suppose that there exists a mapping $T : E_A \rightarrow E_B$ such that $X_B = T(X_A)$ a.s. Then, one has that $\sigma_B \subseteq \sigma_A$, which in turn implies that $\sigma_A \cap \sigma_B = \sigma_{B}$. Notice that necessarily $A \cap B \subset B$ and in the present framework $\sigma_{A \cap B} \subset \sigma_B$. Thus $\sigma_A \cap \sigma_B$ is necessarily different than $\sigma_{A \cap B}$, and thus \Assuref{assu:a1} cannot hold. The result follows by taking the opposite implication.
\end{proof}

\subsection{Non-degenerate stochastic dependence}
The second assumption restricts the distribution of $X$ through a condition on the \emph{inner product of the Lebesgue space $\hLs{X}$}. More precisely, it amounts to control the angles between the subspaces $\hLs{A}$, $A\in \pset{D}$ using the Feshchenko matrix of the inputs $X$.
\begin{assu}[Non-degenerate stochastic dependence]
    The Feshchenko matrix $\Delta$ of $X$ is positive definite.
    \label{assu:a2}
\end{assu}
Since $\Delta$ can be seen as a generalized precision matrix, this assumption is relatively reasonable since standard precision matrices are often assumed to be positive definite to ensure non-degeneracy of the probabilistic structure. Moreover, having a definite positive Feshchenko matrix entails that the maximal partial correlation between $X_A$ and $X_B$ is strictly less than $1$ (\ie the angle itself must be greater than zero), which motivates its name.
\begin{prop}
    Suppose that \Assuref{assu:a2} hold. Then, for any $A,B \in \pset{D}$ such that $A \neq B$,
    $$\cd{\hLs{A}, \hLs{B}} <1.$$
    \label{prop:controlCD}
\end{prop}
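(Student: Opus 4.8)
The plan is to relate positive definiteness of the Feshchenko matrix $\Delta$ to strict inequalities of the individual Friedrichs' angles via a standard argument about principal submatrices. First I would recall that $\Delta$ is a real symmetric $(2^d\times 2^d)$ matrix whose diagonal entries are all $1$ and whose off-diagonal entry at $(A,B)$ is $-\cd{\hLs{A},\hLs{B}} \in [-1,0]$ by \lmeref{lme:relaAngles}(1). If $\Delta$ is positive definite, then every principal submatrix of $\Delta$ is positive definite; in particular, for any two distinct $A, B \in \pset{D}$, the $(2\times 2)$ principal submatrix indexed by $\{A,B\}$, namely
$$
\begin{pmatrix}
1 & -\cd{\hLs{A},\hLs{B}} \\
-\cd{\hLs{A},\hLs{B}} & 1
\end{pmatrix},
$$
is positive definite.

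The next step is to extract the claimed inequality from positive definiteness of this $(2\times 2)$ matrix. Its determinant is $1 - \cd{\hLs{A},\hLs{B}}^2$, which must be strictly positive, so $\cd{\hLs{A},\hLs{B}}^2 < 1$. Since $\cd{\hLs{A},\hLs{B}} \geq 0$, this yields $\cd{\hLs{A},\hLs{B}} < 1$, which is exactly the conclusion. (Alternatively, one can observe directly that if $\cd{\hLs{A},\hLs{B}} = 1$ for some distinct $A,B$, then the vector $v \in \R^{2^d}$ with $v_A = 1$, $v_B = 1$, and all other entries zero satisfies $v^\top \Delta v = 2(1 - \cd{\hLs{A},\hLs{B}}) = 0$ with $v \neq 0$, contradicting positive definiteness.)

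I do not expect a real obstacle here: the argument is a one-line application of the fact that principal submatrices of a positive definite matrix are positive definite, combined with the constraint $\cd{\hLs{A},\hLs{B}} \in [0,1]$ from \lmeref{lme:relaAngles}(1). The only point requiring a little care is making sure the indexing convention of $\Delta$ is used correctly — that distinct $A, B$ genuinely give a principal $(2\times 2)$ submatrix with $1$'s on the diagonal and $-\cd{\hLs{A},\hLs{B}}$ off-diagonal — but this is immediate from \defref{def:MCPM}. It is also worth noting, as the paragraph before \propref{prop:controlCD} already remarks, that this conclusion is strictly weaker than \Assuref{assu:a2}: positive definiteness of the full matrix $\Delta$ is what will actually be needed downstream (via Feshchenko's criterion) to ensure the sum $\sum_{A} V_A$ is closed, whereas the pairwise conclusion here is the easily interpretable consequence that justifies the name ``non-degenerate stochastic dependence''.
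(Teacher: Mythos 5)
Your proposal is correct and follows essentially the same route as the paper: extract the $(2\times 2)$ principal submatrix of $\Delta$ indexed by $\{A,B\}$, use that principal submatrices of a positive definite matrix are positive definite, and conclude $\cd{\hLs{A},\hLs{B}}<1$ (your parenthetical test vector with $v_A=v_B=1$ is exactly the paper's computation $2-2\cd{\hLs{A},\hLs{B}}>0$). Nothing further is needed.
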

\begin{proof}[Proof of \propref{prop:controlCD}]
    Suppose that \Assuref{assu:a2} hold. Then, in particular, the principal submatrix of $\Delta$
    $$\begin{pmatrix}
        1 & -\cd{\hLs{A}, \hLs{B}} \\
        -\cd{\hLs{A}, \hLs{B}} & 1
    \end{pmatrix}$$
    is positive definite as well, and thus,
    $$2 - 2\cd{\hLs{A}, \hLs{B}}>0 \quad \iff \quad \cd{\hLs{A}, \hLs{B}} <1.$$
\end{proof}

\subsection{Proof of the main result}
Our main result can be stated as follows:
\begin{thm}[Direct-sum decomposition of generated Lebesgue spaces]
    For every $A \in \pset{D}$, let $V_{\emptyset} = \hLs{\emptyset}$ and for every $B \in \pset{A}$, let 
    $$V_B = \lrbra{\bigplus_{C \in \pset{-B}} V_C}^\perpa{B}.$$
    If Assumptions~\ref{assu:a1} and \ref{assu:a2} hold, then for every $A \in \pset{D},$ one has that
    $$\hLs{A} = \bigoplus_{B \in \pset{A}} V_B.$$
    \label{thm:thm}
\end{thm}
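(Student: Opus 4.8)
The plan is to argue by strong induction on $|A|$. The base case $A=\emptyset$ is immediate, since $\pset{\emptyset}=\{\emptyset\}$ and $V_\emptyset=\hLs{\emptyset}$. For the inductive step I fix $A\in\pset{D}$ with $|A|\ge 1$, assume the statement for every proper subset of $A$, and set $W:=\sum_{C\in\pset{-A}}V_C$. The first move is to rewrite $W$ as a sum of \emph{full} Lebesgue spaces: the inclusion $V_C\subseteq\hLs{C}$ gives $W\subseteq\sum_{C\in\pset{-A}}\hLs{C}$, while the induction hypothesis gives $\hLs{C}=\bigoplus_{B\subseteq C}V_B\subseteq W$ for every $C\in\pset{-A}$ (each such $B$ lies in $\pset{-A}$), so that $W=\sum_{C\in\pset{-A}}\hLs{C}$.

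Next I would prove that $W$ is closed in $\hLs{A}$. The key point is that $\DeltA{A}$, being a principal submatrix of $\Delta$, is positive definite by \Assuref{assu:a2}, and that it is exactly the Feshchenko matrix of the finite family $\{\hLs{C}:C\in\pset{-A}\}$; Feshchenko's criterion for closedness of a finite sum of closed subspaces \citep{Feshchenko2012,Feshchenko2020} then applies and shows that $W=\sum_{C\in\pset{-A}}\hLs{C}$ is closed. Since $V_A=W^\perpa{A}$ by construction, Hilbert's projection theorem yields the orthogonal splitting $\hLs{A}=W\oplus V_A$, hence $\hLs{A}=V_A+\sum_{C\in\pset{-A}}V_C=\sum_{B\in\pset{A}}V_B$; it then remains only to show that this sum is direct.

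For directness, suppose $\sum_{B\in\pset{A}}v_B=0$ with $v_B\in V_B$. Because $v_A=-\sum_{C\in\pset{-A}}v_C\in W\cap W^\perpa{A}=\{0\}$, it suffices to show that $\sum_{C\in\pset{-A}}v_C=0$ forces $v_C=0$ for every $C$. The heart of the matter is the pairwise estimate $|\iprod{v_C,v_{C'}}|\le\cd{\hLs{C},\hLs{C'}}\norm{v_C}\norm{v_{C'}}$ for $C\ne C'$ in $\pset{-A}$. If $C,C'$ are comparable, say $C\subsetneq C'$, the left-hand side vanishes, since $V_C\subseteq\sum_{E\in\pset{-C'}}V_E$ and $V_{C'}$ is orthogonal to that sum by definition. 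If $C,C'$ are incomparable, \thmref{thm:sidak} combined with \Assuref{assu:a1} gives $\hLs{C}\cap\hLs{C'}=\hLs{C\cap C'}$; as $C\cap C'\subsetneq C$, the induction hypothesis gives $\hLs{C\cap C'}\subseteq\sum_{E\in\pset{-C}}V_E$, so by \lmeref{lme:perpas} one gets $V_C\subseteq\hLs{C}\cap(\hLs{C}\cap\hLs{C'})^\perp$ and symmetrically $V_{C'}\subseteq\hLs{C'}\cap(\hLs{C}\cap\hLs{C'})^\perp$, whence the estimate is immediate from \defref{def:friedAng}. Feeding these bounds into $0=\norm{\sum_{C\in\pset{-A}}v_C}^2=\sum_C\norm{v_C}^2+\sum_{C\ne C'}\iprod{v_C,v_{C'}}$ gives $0\ge a^{\top}\DeltA{A}\,a$ with $a=(\norm{v_C})_{C\in\pset{-A}}\ge 0$; positive definiteness of $\DeltA{A}$ then forces $a=0$, \ie $v_C=0$ for all $C$. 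This closes the induction.

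I expect the directness step to be the main obstacle, for two reasons. First, one must realize that the incomparable summands $V_C$ have to be located inside $\hLs{C}\cap(\hLs{C}\cap\hLs{C'})^\perp$ — the subspace on which Friedrichs' angle, rather than the larger Dixmier angle, is the sharp contraction constant — and this placement is exactly where non-perfect functional dependence enters, through $\hLs{C}\cap\hLs{C'}=\hLs{C\cap C'}$ (\Assuref{assu:a1} via \thmref{thm:sidak}). Second, one must turn the resulting family of pairwise inequalities into a genuine linear-independence statement, which is achieved by reading them as a Gram-type quadratic form dominated by the Feshchenko matrix and invoking \Assuref{assu:a2}. By contrast, the closedness step reduces, once the identity $\sum_{C\in\pset{-A}}V_C=\sum_{C\in\pset{-A}}\hLs{C}$ is noticed, to a direct application of Feshchenko's theorem.
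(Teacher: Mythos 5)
Your induction, the splitting of $\hLs{A}$ into $W$ plus $V_A=W^\perpa{A}$, and especially your directness argument follow the paper's proof closely: the placement $V_C\subseteq\hLs{C}\cap\lrbra{\hLs{C}\cap\hLs{C'}}^\perp$ obtained from \Assuref{assu:a1} and the induction hypothesis, followed by the quadratic form dominated by $\DeltA{A}$, is exactly the content of \lmeref{lme:step1} and of the final part of the paper's proof of \thmref{thm:thm}. The genuine gap is your closedness step. The identity $W=\sum_{C\in\pset{-A}}\hLs{C}$ is correct, but the inference ``$\DeltA{A}$ is positive definite and is the Feshchenko matrix of the family $\lrcubra{\hLs{C}}_{C\in\pset{-A}}$, hence Feshchenko's criterion applies and the sum is closed'' is not available as stated. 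Matrix-type sufficient conditions of this kind rest on a pairwise bound $\absval{\iprod{x,y}}\leq\varepsilon\,\norm{x}\norm{y}$ valid for \emph{all} $x,y$ in the respective subspaces, i.e.\ on Dixmier's angle (\lmeref{lme:relaAngles}, item 2); for the full Lebesgue spaces this mechanism is vacuous, because $\hLs{C}$ and $\hLs{C'}$ share the nontrivial subspace $\hLs{C\cap C'}\supseteq\hLs{\emptyset}$, so that $\cz{\hLs{C},\hLs{C'}}=1$, while the Friedrichs angle controls $\iprod{x,y}$ only for $x,y$ orthogonal to that intersection (\defref{def:friedAng}). No cited result of Feshchenko asserts that positive definiteness of the pairwise Friedrichs-angle matrix forces closedness of a sum of three or more subspaces with differing, nontrivial pairwise intersections; tellingly, your closedness step makes no use of \Assuref{assu:a1}, whereas in the paper that assumption is precisely what makes the closedness argument work, through \lmeref{lme:step1}.

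The repair is the paper's \lmeref{lme:step2}, and you already possess its ingredients in your directness paragraph: run the estimate on the subspaces $V_C$ rather than on the spaces $\hLs{C}$. The induction hypothesis together with \Assuref{assu:a1} gives $\cz{V_C,V_{C'}}\leq\cd{\hLs{C},\hLs{C'}}$ for all distinct $C,C'\in\pset{-A}$; hence for an \emph{arbitrary} tuple $(v_C)_{C\in\pset{-A}}$ with $v_C\in V_C$ (not only one summing to zero) one gets $\norm{\sum_C v_C}^2\geq\lambda\sum_C\norm{v_C}^2$, where $\lambda>0$ is the smallest eigenvalue of $\DeltA{A}$, and the closed range theorem applied to the summation operator from the external direct sum of the $V_C$'s into $\hLs{A}$ shows that $W=\bigplus_{C\in\pset{-A}}V_C$ is closed. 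Only then does the projection theorem yield $\hLs{A}=W\oplus V_A$, after which your directness argument goes through as written.
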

If \thmref{thm:thm} is proven to be true, the HDMR of any element of $\hLs{X}$ follows directly (it is immediate by definition of direct-sums).
\begin{coro}[Orthocanonical decomposition]
    Let $X = (X_1, \dots, X_d)$ be random inputs. Suppose that Assumptions~\ref{assu:a1} and \ref{assu:a2} hold. Then, for any $G : E \rightarrow \R$ such that $G(X) \in \hLs{X}$, $G(X)$ can be uniquely decomposed as
    $$G(X) = \sum_{A \in \pset{D}} G_A(X_A),$$
    where each $G_A(X_A) \in V_A$.
    \label{coro:uniqDecomp}
\end{coro}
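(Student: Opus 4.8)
The corollary is immediate once \thmref{thm:thm} is established: taking $A = D$ gives that every $G(X) \in \hLs{X} = \bigoplus_{A \in \pset{D}} V_A$ has a unique expansion $\sum_{A \in \pset{D}} h_A$ with $h_A \in V_A$, and since $V_A \subseteq \hLs{A}$, the Doob--Dynkin lemma provides a measurable $G_A : E_A \to \R$ with $h_A = G_A(X_A)$ a.s. So the whole content is the proof of \thmref{thm:thm}, which I would carry out by strong induction on $|A|$. The case $A = \emptyset$ is trivial ($\pset{\emptyset} = \{\emptyset\}$ and $V_\emptyset = \hLs{\emptyset}$). For the inductive step, fix $A$ with $|A| \geq 1$, assume $\hLs{C} = \bigoplus_{B \in \pset{C}} V_B$ for every $C \subsetneq A$, and set $W_A \eqdef \bigplus_{C \in \pset{-A}} V_C$, so $V_A = W_A^{\perpa{A}}$. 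Unfolding the hypothesis identifies $W_A = \sum_{C \in \pset{-A}} \hLs{C}$ (every $B \in \pset{-A}$ is contained in a proper subset of $A$, namely itself). The claim for $A$ then reduces to: (i) $W_A$ is closed in $\hLs{A}$, whence $\hLs{A} = W_A \oplus V_A$ by Hilbert's projection theorem; and (ii) the sum $W_A = \bigplus_{B \in \pset{-A}} V_B$ is direct. Indeed (i) and (ii), together with $W_A \cap V_A = \{0\}$, give the unique two-step splitting of every $u \in \hLs{A}$, i.e.\ $\hLs{A} = \bigoplus_{B \in \pset{A}} V_B$.

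I would deduce both (i) and (ii) from one quantitative estimate. For $B \neq B'$ in $\pset{-A}$ put $a_{BB'} \eqdef \cz{V_B, V_{B'}}$ and let $M$ be the symmetric $\pset{-A} \times \pset{-A}$ matrix with unit diagonal and off-diagonal entries $-a_{BB'}$. If $M$ is positive definite, then for any $v_B \in V_B$ the sharp Cauchy--Schwarz bound \eqref{eq:sharpCS} yields
$$\norm{\sum_{B \in \pset{-A}} v_B}^2 \;\geq\; \sum_{B} \norm{v_B}^2 - \sum_{B \neq B'} a_{BB'}\, \norm{v_B}\,\norm{v_{B'}} \;=\; \vec{n}^{\top} M\, \vec{n} \;\geq\; \lambda_{\min}(M)\, \norm{\vec{n}}^2, \qquad \vec{n} = (\norm{v_B})_{B \in \pset{-A}}.$$
Since each $V_B = W_B^{\perpa{B}}$ is a closed (hence complete) subspace of $\hLs{B}$, this lower bound — with the trivial upper bound $\norm{\sum_B v_B} \leq \sqrt{|\pset{-A}|}\,\norm{\vec{n}}$ — shows that the summation map $\bigoplus_{B \in \pset{-A}} V_B \to \hLs{A}$ is a bounded-below embedding, so its range $W_A$ is closed and the map is injective; these are exactly (i) and (ii). This is the mechanism underlying Feshchenko's closedness criterion \citep{Feshchenko2012,Feshchenko2020}, which could instead be invoked directly for (i) through the submatrix $\DeltA{A}$.

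So the real task is the positive-definiteness of $M$, and this is where both assumptions enter. For $B' \subsetneq B$, the definition of $V_B$ forces $V_B \perp \hLs{B'} \supseteq V_{B'}$ (since $\hLs{B'} \subseteq W_B$ by the inductive hypothesis), hence $a_{BB'} = 0$. For incomparable $B, B'$, Assumption~\ref{assu:a1} combined with \thmref{thm:sidak} gives $\hLs{B} \cap \hLs{B'} = \hLs{B \cap B'}$; as $B \cap B' \subsetneq B$ we again get $V_B \perp \hLs{B \cap B'}$, i.e.\ $V_B \subseteq \hLs{B} \cap (\hLs{B} \cap \hLs{B'})^{\perp}$, and items 3 and 6 of \lmeref{lme:relaAngles} then give
$$a_{BB'} \;=\; \cz{V_B, V_{B'}} \;\leq\; \cz{\hLs{B} \cap (\hLs{B} \cap \hLs{B'})^{\perp},\, \hLs{B'}} \;=\; \cd{\hLs{B}, \hLs{B'}}.$$
Hence $M$ dominates entrywise the principal submatrix $\DeltA{A}$ of the Feshchenko matrix (equal unit diagonals, off-diagonal entries $-a_{BB'} \geq -\cd{\hLs{B},\hLs{B'}}$), and both are symmetric with nonpositive off-diagonal entries. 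By Assumption~\ref{assu:a2}, $\Delta$ — hence $\DeltA{A}$ — is positive definite, i.e.\ a symmetric $M$-matrix; writing $\DeltA{A} = I - P$, $M = I - P'$ with $0 \leq P' \leq P$ entrywise, Perron--Frobenius monotonicity of the spectral radius gives $\rho(P') \leq \rho(P) < 1$, so $M = I - P'$ is positive definite as well. This closes the induction.

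The main obstacle is exactly this last positive-definiteness: it is what ties the abstract closedness/directness machinery to the two hypotheses, and it is delicate because the naive monotonicity bound $\cz{V_B, V_{B'}} \leq \cz{\hLs{B}, \hLs{B'}}$ is vacuous — the right-hand side equals $1$, the two spaces sharing the constants — so one must first use Assumption~\ref{assu:a1} to peel off the common intersection $\hLs{B \cap B'}$ (which is annihilated by the orthogonality deliberately built into the $V_B$) and only then compare, thereby converting a Dixmier angle into the Friedrichs angle that Assumption~\ref{assu:a2} controls. Everything after that is bookkeeping, and taking $A$ over all of $\pset{D}$ yields \thmref{thm:thm} and hence \cororef{coro:uniqDecomp}.
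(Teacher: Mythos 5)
Your proposal is correct and follows essentially the same route as the paper: the corollary is read off immediately from the direct-sum theorem, which you prove by induction using the paper's own two key steps — Assumption~\ref{assu:a1} to peel off $\hLs{B \cap C}$ and convert the Dixmier angle between $V_B, V_{C}$ into the Friedrichs angle $\cd{\hLs{B}, \hLs{C}}$, and Assumption~\ref{assu:a2} to obtain a Feshchenko-type smallest-eigenvalue lower bound yielding both closedness and directness of the sum. The only cosmetic difference is that you prove positive definiteness of your intermediate matrix $M$ via entrywise domination and Perron--Frobenius, whereas the paper skips $M$ and bounds the quadratic form directly by $E_A^\top \DeltA{A} E_A \geq \lambda_A E_A^\top E_A$ using nonnegativity of the vector of norms.
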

In order to prove \thmref{thm:thm}, two preliminary results are required.
\begin{lme}
    Let $A \in \pset{D}$, and let $B,C \in \pset{-A}$ be non-empty proper subsets of $A$ such that $B \neq C$. Let $V_B, V_C$ be a closed subspace of $\hLs{B}$ and $\hLs{C}$ respectively. Suppose that \Assuref{assu:a1} holds, and that
    $$V_B \subseteq \lrbra{\hLs{B \cap C}}^\perp, ~ \text{and}~ V_C \subseteq \lrbra{\hLs{B \cap C}}^\perp.$$
    Then,
    $$\cz{V_B, V_C} \leq \cd{\hLs{B}, \hLs{C}}.$$
    \label{lme:step1}
\end{lme}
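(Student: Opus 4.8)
The plan is to reduce the statement to the definitions of the two angles, together with \thmref{thm:sidak} and \Assuref{assu:a1}. Write $H = \hLs{B}$ and $K = \hLs{C}$, viewed as closed subspaces of the Hilbert space $\hLs{X}$. The first step is to identify the intersection $H \cap K$ explicitly: by item~(2) of \thmref{thm:sidak}, $\hLs{B} \cap \hLs{C} = \hL(\sigma_B \cap \sigma_C)$, and by \Assuref{assu:a1} we have $\sigma_B \cap \sigma_C = \sigma_{B \cap C}$, so that $H \cap K = \hLs{B \cap C}$. (This also subsumes the case $B \cap C = \emptyset$, where $\sigma_{B\cap C} = \sigma_\emptyset$ and $\hLs{B\cap C}$ is the space of a.s.\ constants.)

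The second step uses the orthogonality hypothesis on $V_B$: every element of $V_B$ is orthogonal to every element of $\hLs{B\cap C} = H \cap K$, and $V_B \subseteq \hLs{B} = H$, hence $V_B \subseteq H \cap (H \cap K)^\perp$, the orthogonal complement being taken in $\hLs{X}$ (if the paper's hypothesis reads the complement inside $\hLs{B}$ instead, \lmeref{lme:perpas} still places $V_B$ inside $H \cap (H\cap K)^\perp$ taken in $\hLs{X}$). On the other side one only needs the trivial inclusion $V_C \subseteq K$. The third step is monotonicity of Dixmier's angle under inclusion of its arguments, immediate from its definition as a supremum over the unit balls of the two subspaces: since $V_B \subseteq H \cap (H\cap K)^\perp$ and $V_C \subseteq K$,
$$\cz{V_B, V_C} \;\leq\; \cz{H \cap (H\cap K)^\perp,\, K}.$$
Finally, by item~(6) of \lmeref{lme:relaAngles}, the right-hand side equals $\cd{H,K} = \cd{\hLs{B}, \hLs{C}}$, which is the claim. (Alternatively, one can recognise $\cz{H\cap(H\cap K)^\perp,\, K\cap(H\cap K)^\perp}$ directly as Friedrichs' angle from \defref{def:friedAng}, which would then also use the symmetric hypothesis on $V_C$.)

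There is essentially no hard step here: the entire content is the observation that \Assuref{assu:a1}, through \thmref{thm:sidak}, is exactly what pins down $H \cap K = \hLs{B\cap C}$, and that the orthogonality hypothesis places $V_B$ inside the ``reduced'' subspace $H \cap (H\cap K)^\perp$ appearing in the definition of Friedrichs' angle; the inequality then falls out of the sup-definition of Dixmier's angle. The only point requiring a little care is that the orthogonal complements in \defref{def:friedAng} and in \lmeref{lme:relaAngles} are taken with respect to an ambient Hilbert space, but since all the orthogonality relations in play are pairwise ($u \perp v$ for $u \in V_B$ and $v \in \hLs{B\cap C}$), the choice of ambient space ($\hLs{X}$, $\hLs{A}$, or $\hLs{B}+\hLs{C}$) is immaterial, so no ambiguity arises.
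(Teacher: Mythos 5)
Your proposal is correct and follows essentially the same route as the paper's proof: identify $\hLs{B}\cap\hLs{C}=\hLs{B\cap C}$ via \thmref{thm:sidak} and \Assuref{assu:a1}, use monotonicity of Dixmier's angle in its arguments, and conclude with item~(6) of \lmeref{lme:relaAngles}. The only cosmetic difference is that you invoke the asymmetric identity $\cd{H,K}=\cz{H\cap(H\cap K)^\perp,\,K}$ and hence need the orthogonality hypothesis only on $V_B$, whereas the paper uses both hypotheses and the symmetric form $\cz{H\cap(H\cap K)^\perp,\,K\cap(H\cap K)^\perp}$.
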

\begin{proof}[Proof of \lmeref{lme:step1}]
    First, recall that, if \Assuref{assu:a1} holds and thanks to \thmref{thm:sidak}
    $$\hLs{B} \cap \hLs{C} = \hL\lrpar{\sigma_B \cap \sigma_C}= \hLs{B \cap C}.$$
    Then, notice that since
    $$V_B \subseteq \hLs{B} \cap \lrbra{\hLs{B \cap C}}^\perp, \quad \text{ and } V_C \subseteq \hLs{C} \cap \lrbra{\hLs{B \cap C}}^\perp,$$
    one has that
    \begin{align*}
        \cz{V_B, V_C} &= \cz{\hLs{B} \cap V_B, \hLs{C} \cap V_C} \\
        &\leq \cz{\hLs{B} \cap \lrbra{\hLs{B \cap C}}^\perp, \hLs{C} \cap \lrbra{\hLs{B \cap C}}^\perp}.
    \end{align*}
    Hence, if \Assuref{assu:a1} is assumed
    \begin{align*}
        \cz{V_B, V_C} &\leq \cz{\hLs{B} \cap \lrbra{\hLs{B} \cap \hLs{C}}^\perp, \hLs{C} \cap \lrbra{\hLs{B} \cap \hLs{C}}^\perp} \\
        &= \cd{\hLs{B}, \hLs{C}}
    \end{align*}
    where the last equality is achieved using \lmeref{lme:relaAngles}.
\end{proof}

\begin{lme}
    Let $A \in \pset{D}$, and let $\lrpar{V_B}_{B \in \pset{A}, B \neq A}$ be a collection of closed subspaces of $\hLs{A}$ such that, $\forall B,C \in \pset{-A}, B \neq C$,
    $$\cz{V_B, V_C} \leq \cd{\hLs{B}, \hLs{C}}.$$
    Then, under \Assuref{assu:a2}, there exist a $\rho > 0$ such that, for any $\sum_{A \in \pset{-A}} Y_A \in \bigplus_{B \in \pset{-A}} V_B$
    $$\sqrt{\E{ \lrpar{\sum_{B \in \pset{-A}}Y_A}^2}} \geq \rho \sum_{B \in \pset{-A}} \sqrt{\E{Y_A^2}},$$
    and additionally, the sum of subspaces $\bigplus_{B \in \pset{-A}} V_B$ is closed in $\hLs{A}$.
    \label{lme:step2}
\end{lme}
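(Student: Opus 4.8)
The plan is to reduce the claimed norm inequality to a quadratic-form estimate governed by the principal submatrix $\DeltA{A}$ of the Feshchenko matrix --- this is, in spirit, Feshchenko's closedness criterion --- and then to obtain closedness of the sum by a routine completeness argument. Throughout, write $n \eqdef 2^{|A|}-1 = \absval{\pset{-A}}$, and for a generic family $Y_B \in V_B$, $B \in \pset{-A}$, abbreviate $t_B \eqdef \sqrt{\E{Y_B^2}} = \norm{Y_B} \ge 0$, so that $t = (t_B)_{B\in\pset{-A}}$ is a vector with nonnegative entries.

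\textbf{Step 1 (matrix comparison).} Let $\lambda$ be the smallest eigenvalue of $\DeltA{A}$. Since $\DeltA{A}$ is a principal submatrix of $\Delta$, which is positive definite by \Assuref{assu:a2}, it is itself positive definite, so $\lambda > 0$. For any $t$ with nonnegative entries, the hypothesis $\cz{V_B,V_C} \le \cd{\hLs{B},\hLs{C}}$ together with $t_B t_C \ge 0$ yields, term by term,
\[
\sum_{B \in \pset{-A}} t_B^2 - \sum_{B \neq C} \cz{V_B,V_C}\, t_B t_C \;\ge\; \sum_{B \in \pset{-A}} t_B^2 - \sum_{B \neq C} \cd{\hLs{B},\hLs{C}}\, t_B t_C \;=\; t^\top \DeltA{A}\, t \;\ge\; \lambda \norm{t}_2^2 ,
\]
where the sums over $B \neq C$ run over ordered pairs in $\pset{-A}$.

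\textbf{Step 2 (the $\rho$-inequality).} Expanding the squared norm in the Hilbert space $\hLs{A}$ and bounding each cross term from below by the sharp Cauchy--Schwarz inequality~\eqref{eq:sharpCS} of \lmeref{lme:relaAngles} applied to the closed subspaces $V_B, V_C$, i.e. $\iprod{Y_B,Y_C} \ge -\cz{V_B,V_C}\norm{Y_B}\norm{Y_C} = -\cz{V_B,V_C}\, t_B t_C$, one gets
\[
\E{\Big(\sum_{B \in \pset{-A}} Y_B\Big)^{2}} = \sum_{B \in \pset{-A}} \norm{Y_B}^2 + \sum_{B \neq C} \iprod{Y_B,Y_C} \;\ge\; \sum_{B \in \pset{-A}} t_B^2 - \sum_{B \neq C} \cz{V_B,V_C}\, t_B t_C \;\ge\; \lambda \norm{t}_2^2 \;\ge\; \frac{\lambda}{n}\Big(\sum_{B \in \pset{-A}} t_B\Big)^{2},
\]
using Step 1 in the penultimate inequality and $\norm{t}_1 \le \sqrt{n}\,\norm{t}_2$ in the last. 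Taking square roots proves the claim with $\rho \eqdef \sqrt{\lambda / n} = \sqrt{\lambda_{\min}(\DeltA{A})/(2^{|A|}-1)} > 0$. In particular the sum $\bigplus_{B\in\pset{-A}} V_B$ is direct, since $\sum_B Y_B = 0$ forces $t_B = \norm{Y_B} = 0$ for every $B$.

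\textbf{Step 3 (closedness) and the main obstacle.} Closedness is the standard consequence of the $\rho$-inequality: if $z_m = \sum_B Y_B^{(m)}$ with $Y_B^{(m)} \in V_B$ is Cauchy in $\hLs{A}$, then applying the inequality to the family $\big(Y_B^{(m)} - Y_B^{(m')}\big)_B$ gives $\sum_B \norm{Y_B^{(m)} - Y_B^{(m')}} \le \rho^{-1}\norm{z_m - z_{m'}}$, so each coordinate sequence $(Y_B^{(m)})_m$ is Cauchy in the closed --- hence complete --- subspace $V_B$, converging to some $Y_B \in V_B$; then $z_m \to \sum_B Y_B \in \bigplus_B V_B$, so $\bigplus_B V_B$ is closed in $\hLs{A}$. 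The only genuinely delicate point is the passage to the nonnegative orthant in Step 1: the matrix with off-diagonal entries $-\cz{V_B,V_C}$ need not be positive semidefinite on its own, and it is precisely the nonnegativity of the norms $t_B = \norm{Y_B}$ that converts the entrywise bound $\cz{V_B,V_C}\le\cd{\hLs{B},\hLs{C}}$ into a domination of the relevant quadratic form by $t^\top \DeltA{A} t$. Note that \Assuref{assu:a1} plays no direct role here; it enters only through the lemma's hypothesis, via \lmeref{lme:step1}. Everything else is bookkeeping plus a textbook completeness argument.
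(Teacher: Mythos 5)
Your proof is correct and follows essentially the same route as the paper: expand the squared norm, bound the cross terms by the sharp Cauchy--Schwarz inequality of \lmeref{lme:relaAngles}, use the hypothesis $\cz{V_B,V_C}\le\cd{\hLs{B},\hLs{C}}$ together with nonnegativity of the norms to dominate the quadratic form by $t^\top \DeltA{A} t \ge \lambda_{\min}\norm{t}_2^2$ (positive definiteness coming from \Assuref{assu:a2}), and then pass to the $\ell^1$ norm. The only differences are cosmetic: you prove closedness by a direct Cauchy-sequence argument where the paper invokes the closed-range theorem for the bounded-below summation operator on the external direct sum, and your constant $\sqrt{\lambda/(2^{|A|}-1)}$ is marginally sharper than the paper's $\sqrt{\lambda_A/(2^{d}-1)}$; both variants are valid.
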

\begin{proof}[Proof of \lmeref{lme:step2}]
    Let $H_A = \bigoplus_{B \in \pset{A} : B \neq A} V_A$ be the Hilbert space external direct-sum (\see \cite[Definition 6.4]{Conway2007}) of the collection of closed (and thus Hilbert) subspaces $\lrpar{V_B}_{B \in \pset{A}, B \neq A}$. Let $T_A$ be the operator defined as
    \begin{align*}
        T_A : H_A & \rightarrow \hLs{A} \\
        Y = \lrpar{Y_B}_{B \in \pset{-A}} &\mapsto \sum_{B \in \pset{-A}} Y_B
    \end{align*}
    and notice that 
    $$\Ran{T_A} = \bigplus_{B \in \pset{-A}} V_B \subseteq \hLs{A}.$$
    One then has that
    \begin{align*}
        \E{\lrpar{\sum_{B \in \pset{-A}} Y_B}^2} &= \sum_{B \in \pset{-A}} \E{Y_B^2} + \sum_{B,C \in \pset{-A} : B \neq C} \E{Y_AY_B} \\
        &\geq \sum_{B \in \pset{-A}} \E{Y_B^2} -  \sum_{B,C \in \pset{-A} : B \neq C} \cz{V_A, V_B}\sqrt{\E{Y_A^2}} \sqrt{\E{Y_B^2}} \\
        &\geq \sum_{B \in \pset{-A}} \E{Y_B^2} -  \sum_{B,C \in \pset{-A} : B \neq C} \cd{\hLs{A}, \hLs{B}}\sqrt{\E{Y_A^2}} \sqrt{\E{Y_B^2}}
    \end{align*}
    where the first inequality is achieved thanks to \lmeref{lme:relaAngles}. Denote $E_A = \lrpar{\sqrt{\E{Y_B^2}}}_{B \in \pset{-A}}$ and notice that
    \begin{align*}
        \sum_{B \in \pset{-A}} \E{Y_B^2} -  \sum_{B,C \in \pset{-A} : B \neq C} \cd{\hLs{A}, \hLs{B}}\sqrt{\E{Y_A^2}} \sqrt{\E{Y_B^2}} = E_A^\top \DeltA{A} E_A
    \end{align*}
    Denote $\lambda_A$ the smallest eigenvalue of $\DeltA{A}$, and notice that if \Assuref{assu:a2} holds, $\DeltA{A}$ is definite positive and $\lambda_A >0$. Thus, one has that
    \begin{align*}
        E_A^\top \DeltA{A} E_A &\geq \lambda_A E_A^\top E_A \\
        &= \lambda_A \sum_{B \in \pset{-A}} \E{Y_A^2}.
    \end{align*}
    Hence, one has that
    \begin{align*}
        \sqrt{\E{\lrpar{\sum_{B \in \pset{-A}} Y_B}^2}} &\geq \sqrt{\lambda_A \sum_{B \in \pset{-A}} \E{Y_A^2}} \\
        &\geq \sqrt{\frac{\lambda_A}{2^d -1}} \sum_{B \in \pset{-A}} \sqrt{\E{Y_A^2}}
    \end{align*}
    where the last inequality is achieved using Jensen's inequality. Hence, one has that, for any $Y \in H_A$
    $$\sqrt{\E{T_A(Y)^2}} \geq \sqrt{\frac{\lambda_A}{2^d -1}} \sum_{B \in \pset{-A}} \sqrt{\E{Y_A^2}}$$
    where $\sqrt{\frac{\lambda_A}{2^d -1}} >0$, and $\sum_{B \in \pset{A}} \sqrt{\E{Y_A^2}}$ is the norm of $Y$ on $H_A$. Hence, by the closed ranged operator theorem (\see \cite[Theorem 2.5]{Abramovich2002}),
    $$\Ran{T_A} = \bigplus_{B \in \pset{-A}} V_B\text{ is closed in } \hLs{A}.$$
\end{proof}

We can now proceed with the proof of \thmref{thm:thm}.

\begin{proof}[Proof of \thmref{thm:thm}]
    The proof is done in two steps. First, we prove by induction that, $\forall A \in \pset{D}$
    $$\hLs{A} = \bigplus_{C \in \pset{A}} V_C,$$
    and then we show that this sum of subspaces is direct. The induction is done on the number of inputs up to $d$.

    \textbf{Statement.}~
    Let $n=1, \dots, d-1$. We will show that if for every non-empty $C \in \pset{D}$, $C$ such that $|C| = n$, one has that
        $$\hLs{C} = \bigplus_{Z \in \pset{C}} V_Z ~\text{ where }~ V_C = \lrbra{\bigplus_{Z \in \pset{-C}} V_Z}^\perpa{C}.$$
    In this case, it follows that for every $A \in \pset{D}$ such that $|A| = n+1$,
        $$\hLs{A} = \bigplus_{C \in \pset{A}} V_C ~\text{ where }~ V_A = \lrbra{\bigplus_{Z \in \pset{-A}} V_Z}^\perpa{A}.$$
    The induction is made on the passage from $n$ to $n+1$.

    \textbf{Base Case.}~
    We start for $n=1$. For any $i \in D$, denote $V_i = \lrbra{V_{\emptyset}}^\perpa{i}$, and notice that since $V_{\emptyset}$ is closed in $\hLs{i}$
    $$\hLs{i} = V_{\emptyset} \oplus V_i.$$
    and notice that $\forall i \in D$, 
    $$V_i = \lrbra{\hLs{\emptyset}}^\perpa{i} \subseteq \lrbra{\hLs{\emptyset}}^\perp,$$
    by \lmeref{lme:perpas}.
    
    Next, consider the case where $n=2$. Notice from the previous step that for any $i,j \in D$ such that $i \neq j$, notice that $\hLs{i \cap j} = \hLs{\emptyset}$, and thus one has that
    $$V_i \subset \lrbra{\hLs{\emptyset}}^\perp \text{ and } V_j \subset \lrbra{\hLs{\emptyset}}^\perp.$$
    Hence, assuming that \Assuref{assu:a1} hold, from \lmeref{lme:step1}, one can conclude that, for any $i,j \in D$ such that $i \neq j$,
    $$\cz{V_i, V_j} \leq \cd{\hLs{i}, \hLs{j}}.$$
    Now, let $A \in \pset{D}$ such that $|A|=2$, and denote $A=\{i,j\}$, and notice that, under \Assuref{assu:a2}, by \lmeref{lme:step2}, one has that
    $$V_{\emptyset } + V_i + V_j \text{ is closed in } \hLs{A}.$$
    Hence, let
    $$V_A = \lrbra{V_{\emptyset } + V_i + V_j}^\perpa{A},$$
    and notice that
    $$\hLs{A} = \lrbra{V_{\emptyset} + V_i + V_j} \oplus V_A.$$
    Since $i$ and $j$ have been chosen arbitrarily, this holds for any $A \in \pset{D}$ such that $|A|=2$.

    \textbf{Induction.}~
    Suppose that, for every $B \in \pset{D}$ such that $|B|=n$, one has that
    $$\hLs{B} = \bigplus_{Z \in \pset{B}} V_Z, \text{ where } V_B = \lrbra{\bigplus_{Z \in \pset{-B}} V_Z}^\perpa{B}.$$
    
    Let $A \in \pset{D}$ such that $|A| = n+1$. Notice then that, for any non-empty $B,C \in \pset{-A}$, since $B \cap C \in \pset{-B} \cap \pset{-C}$, that
    $$\hLs{B \cap C} = \bigplus_{Z \in \pset{B \cap C}} V_Z,$$
    is necessarily contained of $\bigplus_{Z \in \pset{-B}} V_Z$ and of $\bigplus_{Z \in \pset{-C}} V_Z$. Thus, one has that
    $$V_B = \lrbra{\bigplus_{Z \in \pset{-B}} V_Z}^\perpa{B} \subset \lrbra{\bigplus_{Z \in \pset{-B}} V_Z}^\perp \subset \lrbra{\hLs{B \cap C}}^\perp.$$
    and analogously
    $$V_C \subset \lrbra{\hLs{B \cap C}}^\perp.$$
    Hence, assuming that \Assuref{assu:a1} hold, from \lmeref{lme:step1}, one can conclude that, for every non-empty $B,C \in \pset{-A}$ such that $B \neq C$,
    $$\cz{V_B, V_C} \leq \cd{\hLs{B}, \hLs{C}},$$
    which, under \Assuref{assu:a2} and thanks to \lmeref{lme:step2}, implies that $\bigplus_{Z \in \pset{-A}} V_Z$ is closed in $\hLs{A}.$
    Denote $V_A = \lrbra{\bigplus_{Z \in \pset{-A}} V_Z}^\perpa{A}$, and notice that
    $$\hLs{A} = \lrbra{\bigplus_{Z \in \pset{-A}} V_Z} \oplus V_A = \bigplus_{Z \in \pset{A}} V_Z.$$
    Since $A$ has been taken arbitrarily, this holds for any $A \in \pset{D}$ such that $|A|=n$.

    Now, we show that this decomposition is direct. Let $A \in \pset{D}$, and notice that for any non-empty $\forall B \in \pset{A}$, $V_B \perp \hLs{\emptyset}$, meaning that any $f(X_B) \in V_B$ is centered. Next, notice that the principal $\lrpar{2^{|A|} \times 2^{|A|}}$ submatrix of $\Delta$, indexed by the elements of $\pset{A}$ and denoted $\Delta_A$, is also definite-positive, and hence its smallest eigenvalue $\lambda_A$ is positive. Notice further that for any $Y \in \hLs{A}$, by definition, one has that:
    $$Y = \sum_{B \in \pset{A}} Y_B, \quad \text{ where } Y_B \in V_B.$$
    Now, suppose that $Y=0$ a.s., which is equivalent to $\E{Y}=0$ and $\E{Y^2}=0$. However, under Assumptions~\ref{assu:a1} and \ref{assu:a2}, notice that
    \begin{align*}
        \E{Y^2} &= \E{\lrpar{\sum_{B \in \pset{A}} Y_B}^2} \\
        &= \sum_{B \in \pset{A}} \E{Y_B^2} + \sum_{B,C \in \pset{A} : B \neq C} \E{Y_BY_C} \\
        &\geq \sum_{B \in \pset{A}} \E{Y_B^2} - \sum_{B,C \in \pset{A} : B \neq C} \cz{V_B, V_C} \sqrt{\E{Y_B^2}}\sqrt{ \E{Y_C^2}} \\
        &\geq \sum_{B \in \pset{A}} \E{Y_B^2} - \sum_{B,C \in \pset{A} : B \neq C} \cd{\hLs{B}, \hLs{C}} \sqrt{\E{Y_B^2}}\sqrt{ \E{Y_C^2}}
    \end{align*}
    Let $E_A = \lrpar{\sqrt{\E{Y_B^2}}}_{B \in \pset{A}}^\top$ and notice that
    \begin{equation*}
        \E{Y^2} \geq E_A^\top \Delta_A E_A \geq \lambda_A E_A^\top E_A = \lambda_A \sum_{B \in \pset{A}} \E{Y_B^2}
    \end{equation*}
    since $\Delta_A$ is definite positive, and $\lambda_A >0$ is its smallest eigenvalue. Thus, one has that if $\E{Y^2} = 0$, then necessarily
    $$\sum_{B \in \pset{A}} \E{Y_B^2} = 0,$$
    and since this is a sum of positive elements, $\forall B \in \pset{A}$, $\E{Y_B^2}=0$. In addition to the fact that each $Y_B$ is centered, it is equivalent to every the summand $Y_B$ being equal to $0$ a.s. Hence,
    $$Y = 0 \text{ a.s. } \implies \forall B \in \pset{D}, \quad Y_B = 0 \text{ a.s. }$$
    which ultimately proves that
    $$\hLs{A} = \bigoplus_{B \in \pset{A}} V_B.$$
\end{proof}

\section{Some observations} \label{sec:Obs}
Two main observations are discussed in this section. First, the ``hierarchical orthogonality'' structure that naturally arises from the orthocanonical decomposition developed above. Then, we leverage canonical projections due to the direct-sum decomposition of \thmref{thm:thm} to characterize the elements of the decomposition and we present some of their properties.

\subsection{Hierarchical orthogonality}

The set of subspaces $(V_A)_{A \in \pset{D}}$ benefit of a particular orthogonality structure, namely \emph{hierarchical orthogonality}, reminiscent of the one described in \cite{Chastaing2012}.
\begin{prop}[Hierarchical orthogonality]
    We place ourselves in the framework of \thmref{thm:thm}. For any $A \in \pset{D}$, and any $B \subset A$,
    $$V_A \perp V_B.$$
    \label{prop:hierOrth}
\end{prop}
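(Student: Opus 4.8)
The statement is an essentially immediate structural consequence of the recursive definition of the subspaces $(V_A)_{A\in\pset{D}}$ in \thmref{thm:thm}, so the plan is short. Fix $A \in \pset{D}$ and a proper subset $B \subset A$. The first step is to record that $B \in \pset{-A} = \pset{A}\setminus\{A\}$, so that $V_B$ is literally one of the summands appearing in $\bigplus_{C \in \pset{-A}} V_C$; in particular $V_B \subseteq \bigplus_{C \in \pset{-A}} V_C$. The second step is to note that, since $B \subset A$, \thmref{thm:sidak} gives $\hLs{B} \subseteq \hLs{A}$, hence $V_B \subseteq \hLs{B} \subseteq \hLs{A}$, and (by the proof of \thmref{thm:thm}) $\bigplus_{C \in \pset{-A}} V_C$ is a closed subspace of $\hLs{A}$, so the orthogonal complement below is taken inside the same ambient Hilbert space as $V_B$.

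Now, by definition,
$$V_A = \lrbra{\bigplus_{C \in \pset{-A}} V_C}^\perpa{A}$$
is the orthogonal complement of $\bigplus_{C \in \pset{-A}} V_C$ in $\hLs{A}$. Therefore, for every $x \in V_A$ and every $y \in V_B \subseteq \bigplus_{C \in \pset{-A}} V_C$ one has $\iprod{x,y} = 0$, which is exactly $V_A \perp V_B$. The edge case $B = \emptyset$ (when $A \neq \emptyset$) is covered by the same argument, since $V_\emptyset = \hLs{\emptyset}$ is then also one of the summands indexed by $\pset{-A}$.

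There is no real obstacle: hierarchical orthogonality is built into the construction, because $V_A$ was defined precisely as the orthogonal complement (within $\hLs{A}$) of the span of all strictly smaller $V_C$'s. The only points deserving a line of justification are the inclusion $V_B \subseteq \hLs{A}$ (via \thmref{thm:sidak}) and the membership $B \in \pset{-A}$, which is where the properness of the inclusion $B \subset A$ is used; note that this argument says nothing about pairs $V_B, V_C$ with $B, C$ both proper subsets of $A$ and incomparable, which indeed need not be orthogonal in general.
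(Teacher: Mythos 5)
Your argument is correct and is exactly the paper's reasoning: the paper's proof simply states that the claim is a direct consequence of the definition of $V_A$ as the orthogonal complement, within $\hLs{A}$, of $\bigplus_{C \in \pset{-A}} V_C$, which contains $V_B$ since $B \in \pset{-A}$. Your write-up just makes explicit the inclusions ($V_B \subseteq \hLs{A}$, membership of $B$ in $\pset{-A}$, and the case $B=\emptyset$) that the paper leaves implicit.
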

\begin{proof}[Proof of \propref{prop:hierOrth}]
    It is a direct consequence of the definition of $V_A$.
\end{proof}

This particular structure can be illustrated using the Boolean lattice \citep{Davey2002}. In order to formally differentiate between the structurally hierarchical subspaces and those that are not necessarily orthogonal, two different sets related to this structure are introduced. For any $A \in \pset{D}$, the first one is the set of \emph{comparables} (\ie the elements of $\pset{D}$ that are subsets of $A$ or such that $A$ is a subset of), denoted $\calC_A = \pset{A} \cup \lrcubra{B \in \pset{D} : A \subseteq D}$, and notice that, for any $B \in \calC_{A}$, $V_B \perp V_A$. Then, we define the set of \emph{uncomparables} of $A$ as $\calU_A = \pset{D} \setminus \calC_A$, and notice that, in general, for every $B \in \calU_A$, $V_A$ is not necessarily orthogonal to $V_B$. And notice additionally that, for any $A \in \pset{D}$, $\pset{D} = \calC_A \cup \calU_A$.

\begin{rmk}
    It is important to note that the hierarchical orthogonality of the subspaces $\lrpar{V_A}_{A \in \pset{D}}$ is a consequence of the \textit{choice of inductively choosing orthogonal complements} in \thmref{thm:thm}. Other complements, \ie not necessarily orthogonal, could have been chosen, leading to a different structure. This is why the output decomposition in \cororef{coro:uniqDecomp} is called ``orthocanonical''. A different choice of complements could lead to a different decomposition.
\end{rmk}

\subsection{Canonical oblique projections and their properties} \label{sec:oblProj}
Two different projectors onto the subspaces $V_A$ ($A \in \pset{D}$), can be defined. Let $A$ be any element of $\pset{D}$, and denote by $P_A$ the orthogonal projector onto $V_A$. Additionally, for every $A \in \pset{D}$, denote $W_A = \bigoplus_{B \in \pset{D}: B \neq A} V_B$, and the operators
\begin{align*}
    Q_A : \hLs{X} &\rightarrow \hLs{X} \\
    G(X) = \sum_{B \in \pset{D}} G_B(X_B) &\mapsto G_A(X_A)
\end{align*}
and notice that $Q_A$ is the projector onto $V_A$ parallel to $W_A$. This operator is well-defined thanks to the direct-sum decomposition of \thmref{thm:thm} (\see \cite[Theorem 3.4]{Rakic2018}). It is important to note that the operators $P_A(\cdot)$ and $Q_A(\cdot)$ are both projectors onto $V_A$, but their nullspaces differ.

Now, denote by $\Ebb_A$ the orthogonal projector onto $\hLs{A}$, and notice that it is the conditional expectation operator given $X_A$ (\see \cite[Chapter 8]{Kallenberg2021}). Additionally, denote $\widecheck{W}_A = \bigoplus_{B \in \pset{D}, B \not \in \pset{A}} V_B$ and the operator
\begin{align*}
    \Mbb_A : \hLs{X} &\rightarrow \hLs{X} \\
    G(X) = \sum_{B \in \pset{D}} G_B(X_B) &\mapsto \sum_{B \in \pset{A}} G_B(X_B).
\end{align*}
Notice that $\Mbb_A$ is the projection onto $\Ran{\Mbb_A} = \hLs{A}$ parallel to $\Ker{\Mbb_A} = \widecheck{W}_A$. Hence, the operators $\condE{A}{\cdot}$ and $\Mbb_A\lrbra{\cdot}$ are two projections onto $\hLs{A}$, but with different nullspaces as well.
\begin{prop}[Annihilating property]
    We place ourselves in the framework of \thmref{thm:thm} and \cororef{coro:uniqDecomp}. For any $A \in \pset{D}$ and any $B \subset A$,
    $$P_B\lrpar{Q_A\lrpar{G(X)}} = P_B\lrpar{G_A(X_A)} = 0.$$
    \label{prop:anniProp}
\end{prop}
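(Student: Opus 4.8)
The plan is to exploit the hierarchical orthogonality established in \propref{prop:hierOrth} together with the fact that $P_B$ is the \emph{orthogonal} projector onto $V_B$. First I would note that the first equality is immediate: by definition of $Q_A$, for $G(X) = \sum_{C \in \pset{D}} G_C(X_C)$ with each $G_C(X_C) \in V_C$ (the unique decomposition of \cororef{coro:uniqDecomp}), we have $Q_A(G(X)) = G_A(X_A)$, so $P_B(Q_A(G(X))) = P_B(G_A(X_A))$. Thus everything reduces to showing $P_B(G_A(X_A)) = 0$ whenever $B \subset A$, i.e. that $G_A(X_A) \in V_A$ lies in the kernel of the orthogonal projector onto $V_B$.

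Next I would use the characterization of the kernel of an orthogonal projector: since $P_B$ is the orthogonal projector onto the closed subspace $V_B$ of $\hLs{X}$, one has $\Ker{P_B} = V_B^\perp$, so it suffices to check $G_A(X_A) \perp V_B$, equivalently $V_A \perp V_B$ (since $G_A(X_A)$ is an arbitrary element of $V_A$ and the containment $G_A(X_A) \in V_A$ is all we use). But $B \subset A$ is precisely the hypothesis of \propref{prop:hierOrth}, which gives $V_A \perp V_B$ directly. Hence $G_A(X_A) \in V_A \subseteq V_B^\perp = \Ker{P_B}$, so $P_B(G_A(X_A)) = 0$, which completes the argument.

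There is essentially no obstacle here: the proposition is a short formal consequence of two facts already in hand, namely the definition of the canonical oblique projector $Q_A$ (well-defined by \thmref{thm:thm}) and the hierarchical orthogonality $V_A \perp V_B$ for $B \subset A$. The only point requiring a word of care is the passage ``$G_A(X_A) \in V_A$ and $V_A \perp V_B$ imply $G_A(X_A) \in \Ker{P_B}$,'' which is just the standard fact that an orthogonal projector annihilates the orthogonal complement of its range; this is why it is important that $P_B$ (rather than the oblique projector $Q_B$) appears in the statement, since for $Q_B$ the kernel is $W_B$, not $V_B^\perp$, and the conclusion could fail.
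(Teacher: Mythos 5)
Your proposal is correct and follows essentially the same route as the paper: invoke the hierarchical orthogonality of \propref{prop:hierOrth} to get $V_A \perp V_B$ for $B \subset A$, so that $G_A(X_A) \in V_A \subseteq V_B^\perp = \Ker{P_B}$, and hence $P_B\lrpar{G_A(X_A)} = 0$. Your write-up merely makes explicit the two steps (the identification $Q_A(G(X)) = G_A(X_A)$ and the fact that an orthogonal projector annihilates the orthogonal complement of its range) that the paper's one-line proof leaves implicit.
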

\begin{proof}[Proof of \propref{prop:anniProp}]
    From \propref{prop:hierOrth}, for every $B \subset A$, one has that $V_B \perp V_A$, and thus $G_A(X_A) \in V_A \subset V_B^\perp$.
\end{proof}
\propref{prop:anniProp} is a particular consequence of the hierarchical orthogonality structure, known as the \emph{annihilating property} (\seg \cite[Lemma 1]{Hooker2007} or \cite{Kuo2009}).

It is possible to find a formula to link the oblique projections $\lrpar{Q_A}_{A \in \pset{D}}$ to the oblique projections $\lrpar{\Mbb_A}_{A \in \pset{D}}$ by using combinatorial arguments.
\begin{prop}[Formula for oblique projections]
    We place ourselves in the framework of \thmref{thm:thm} and \cororef{coro:uniqDecomp}. One has that, for any $G(X) \in \hLs{X}$, and for any $A \in \pset{D}$
    $$Q_A(G(X)) = \sum_{B \in \pset{A}} (-1)^{|A| - |B|} \Mbb_A\lrbra{G(X)}.$$
    \label{prop:mobius}
\end{prop}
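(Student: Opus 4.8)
The claimed identity $Q_A(G(X)) = \sum_{B \in \pset{A}} (-1)^{|A|-|B|} \Mbb_B[G(X)]$ (I read the statement's $\Mbb_A$ inside the sum as $\Mbb_B$, since otherwise the right-hand side collapses and the formula is false — this is the natural Möbius-inversion analogue of \eqref{eq:HDMR}) is a purely algebraic consequence of two facts already available: (i) by \propref{prop:mobius}'s ambient setup, $\Mbb_B$ is the canonical projection onto $\hLs{B} = \bigoplus_{C \in \pset{B}} V_C$ parallel to $\widecheck{W}_B = \bigoplus_{C \notin \pset{B}} V_C$; hence on the decomposition $G(X) = \sum_{C \in \pset{D}} G_C(X_C)$ one has the clean formula $\Mbb_B[G(X)] = \sum_{C \in \pset{B}} G_C(X_C)$; and (ii) $Q_A(G(X)) = G_A(X_A)$ by definition of $Q_A$.

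First I would substitute (i) into the right-hand side and exchange the order of summation:
\begin{align*}
    \sum_{B \in \pset{A}} (-1)^{|A|-|B|} \Mbb_B[G(X)]
    &= \sum_{B \in \pset{A}} (-1)^{|A|-|B|} \sum_{C \in \pset{B}} G_C(X_C) \\
    &= \sum_{C \in \pset{A}} G_C(X_C) \sum_{\substack{B \in \pset{A} \\ C \subseteq B}} (-1)^{|A|-|B|}.
\end{align*}
The inner sum ranges over all $B$ with $C \subseteq B \subseteq A$; writing $B = C \cup S$ for $S \subseteq A \setminus C$ and setting $m = |A \setminus C|$, it equals $\sum_{k=0}^{m} \binom{m}{k} (-1)^{m-k} = (1-1)^m \cdot (-1)^0$-style, i.e. $\sum_{S \subseteq A\setminus C}(-1)^{|A|-|C|-|S|} = (1-1)^{|A \setminus C|}$, which is $0$ unless $A \setminus C = \emptyset$, i.e. $C = A$, in which case it is $1$. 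Therefore the whole right-hand side collapses to $G_A(X_A)$, which by (ii) is exactly $Q_A(G(X))$.

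The two things that actually need justification, rather than bookkeeping, are: establishing (i) rigorously — namely that $\Mbb_B$ applied to the orthocanonical decomposition simply truncates it to $\pset{B}$. This follows because $\Mbb_B$ is defined in \Secref{sec:oblProj} precisely as the linear map sending $\sum_{C \in \pset{D}} G_C(X_C) \mapsto \sum_{C \in \pset{B}} G_C(X_C)$, so strictly speaking (i) is the definition and no work is needed; I would just cite it. The one genuine subtlety is that the exchange of summation is legitimate: the outer sum is finite (over $\pset{A}$), each $\Mbb_B[G(X)]$ is itself a finite sum over $\pset{B} \subseteq \pset{A}$, so the double sum is a finite sum of elements of $\hLs{X}$ and Fubini for finite sums applies with no convergence concerns.

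The main (and only) obstacle is thus essentially notational: one must be careful that the combinatorial identity $\sum_{C \subseteq B \subseteq A} (-1)^{|A|-|B|} = \mathds{1}_{\{C = A\}}$ is the standard Möbius function of the Boolean lattice, and invoke it cleanly — this is precisely the same cancellation that underlies the classical Hoeffding formula in \eqref{eq:HDMR}, so I would phrase the argument to make that parallel explicit. There is no analytic content and no use of Assumptions~\ref{assu:a1}--\ref{assu:a2} beyond what is already baked into the existence and well-definedness of the direct-sum decomposition (hence of $Q_A$ and $\Mbb_B$).
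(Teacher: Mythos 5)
Your proof is correct and follows essentially the same route as the paper: both rest on the relation $\Mbb_B\lrbra{G(X)} = \sum_{C \in \pset{B}} G_C(X_C)$ together with Möbius inversion on the Boolean lattice — the paper simply cites Rota's inversion formula at this point, while you verify the alternating-sum cancellation $\sum_{C \subseteq B \subseteq A} (-1)^{|A|-|B|} = 0$ unless $C = A$ by hand, which is the same content made explicit. Your reading of the statement's $\Mbb_A$ inside the sum as a typo for $\Mbb_B$ is also the right one (the paper's own proof contains the same index slips, writing $Q_A$ and $\Mbb_A$ where $Q_B$ and $\Mbb_B$ are meant).
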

\begin{proof}[Proof of \propref{prop:mobius}]
    By definition of $\Mbb_A$, one has that
    $$\forall A \in \pset{D}, \quad \Mbb_A\lrbra{G(X)} = \sum_{B \in \pset{A}} Q_A(G(X)),$$
    which, thanks to Rota's generalization of the Möbius inversion formula \citep{Rota1964, ilidrissi2023}, is equivalent to
    $$\forall A \in \pset{D}, \quad Q_A(G(X)) = \sum_{B \in \pset{A}} (-1)^{|A| - |B|} \Mbb_A\lrbra{G(X)}.$$
\end{proof}

\subsection{The particular case of mutual independence} \label{sec:mutIndep}
For a random element $X = (X_1, \dots, X_d)$, mutual independence can be defined \wrt the independence of their generated $\sigma$-algebras. More precisely, $X$ is said to be mutually independent if \citep[Definition 3.0.1]{Malliavin1995}
$$\forall A \in \pset{D}, \quad \sigma_A \indep \sigma_{D \setminus A} \iff \cz{\hLz{A}, \hLz{D \setminus A}} = 0.$$

\begin{prop}
    Let $X$ be a vector of random elements. If $X$ is mutually independent, then \Assuref{assu:a1} hold.
    \label{prop:indepA1}
\end{prop}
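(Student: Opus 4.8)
The plan is to establish the two inclusions $\sigma_{A\cap B}\subseteq\sigma_A\cap\sigma_B$ and $\sigma_A\cap\sigma_B\subseteq\sigma_{A\cap B}$ separately. The first is immediate from the standing framework: since $A\cap B\subseteq A$ and $A\cap B\subseteq B$, the monotonicity of generated $\sigma$-algebras assumed in \Secref{sec:preli} gives $\sigma_{A\cap B}\subseteq\sigma_A$ and $\sigma_{A\cap B}\subseteq\sigma_B$, hence $\sigma_{A\cap B}\subseteq\sigma_A\cap\sigma_B$; no independence is needed here.

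For the reverse inclusion I would write $C=A\cap B$ and decompose $B=C\sqcup(B\setminus A)$, so that $\sigma_B=\sigma_C\vee\sigma_{B\setminus A}$. The key observation is that $B\setminus A\subseteq D\setminus A$, hence $\sigma_{B\setminus A}\subseteq\sigma_{D\setminus A}$, and mutual independence of $X$ (in the sense $\sigma_A\indep\sigma_{D\setminus A}$) yields $\sigma_A\indep\sigma_{B\setminus A}$; moreover $\sigma_C\subseteq\sigma_A$ since $C\subseteq A$. Now fix $S\in\sigma_A\cap\sigma_B$ and consider $\mathbf{1}_S\in\hLs{X}$. Because $\mathbf{1}_S$ is measurable with respect to $\sigma_C\vee\sigma_{B\setminus A}$ with $\sigma_{B\setminus A}$ independent of $\sigma_A\supseteq\sigma_C$, the standard independence (``freezing'') property of conditional expectation (\seg \cite[Chapter 8]{Kallenberg2021}) gives $\condE{A}{\mathbf{1}_S}=\condE{A\cap B}{\mathbf{1}_S}$ a.s. On the other hand, $S\in\sigma_A$ forces $\condE{A}{\mathbf{1}_S}=\mathbf{1}_S$ a.s. Hence $\mathbf{1}_S$ agrees a.s.\ with the $\sigma_{A\cap B}$-measurable variable $\condE{A\cap B}{\mathbf{1}_S}$; setting $S'=\lrcubra{\condE{A\cap B}{\mathbf{1}_S}=1}\in\sigma_{A\cap B}$, the symmetric difference $S\triangle S'$ is $\prob$-null, and since $\sigma_\emptyset\subseteq\sigma_{A\cap B}$ contains all $\prob$-null sets we conclude $S\in\sigma_{A\cap B}$. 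Combining both inclusions gives $\sigma_A\cap\sigma_B=\sigma_{A\cap B}$, i.e.\ \Assuref{assu:a1} holds.

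The only point requiring care is the freezing identity: one must check that $\sigma_{B\setminus A}$ is independent of the \emph{whole} $\sigma_A$, not merely of $\sigma_C$, which is exactly what the definition of mutual independence delivers (through $\sigma_A\indep\sigma_{D\setminus A}$ together with $\sigma_{B\setminus A}\subseteq\sigma_{D\setminus A}$), and one should dispatch the degenerate cases $A=D$ (then $B\setminus A=\emptyset$, $\sigma_B=\sigma_C$, and the claim is trivial) and $A\cap B=\emptyset$ (then $\condE{A\cap B}{\cdot}=\E{\cdot}$, so $\mathbf{1}_S$ is a.s.\ constant and $S$ is $\prob$-trivial). Alternatively one could invoke a ready-made measure-theoretic lemma on intersections of $\sigma$-algebras generated by independent sub-collections, but the conditional-expectation argument above is short and self-contained.
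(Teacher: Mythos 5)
Your proof is correct, but it takes a genuinely different route from the paper's. The paper argues by contraposition: it invokes the fact (cited from Malliavin) that independence of two $\sigma$-algebras forces their intersection to be the $\prob$-trivial $\sigma$-algebra $\sigma_{\emptyset}$, and applies this to the complementary coalitions $\sigma_A$ and $\sigma_{D\setminus A}$; as written, that argument is very terse and essentially only treats the disjoint/complementary case (its negation step ``for any $A$, $\sigma_A\cap\sigma_{D\setminus A}\neq\sigma_{\emptyset}$'' does not follow literally from the failure of \Assuref{assu:a1} at some arbitrary pair). You instead establish $\sigma_A\cap\sigma_B=\sigma_{A\cap B}$ directly for every pair, including overlapping ones with $A\cap B\neq\emptyset$, via a double inclusion and a conditional-independence (``freezing'') argument; this is longer but delivers the full statement of \Assuref{assu:a1} explicitly, and your treatment of null sets (using $\sigma_{\emptyset}\subseteq\sigma_{A\cap B}$, which indeed follows from the paper's standing assumptions on the $\sigma_i$) and of the degenerate cases is sound. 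One small remark: the textbook freezing lemma is most directly applied with $\calH=\sigma_{A\setminus B}$ independent of $\sigma_B$, which contains both $\sigma_{A\cap B}$ and the $\sigma$-algebra generated by your indicator, and which mutual independence also provides via $\sigma_B\indep\sigma_{D\setminus B}$; your variant, based on $\sigma_{B\setminus A}\indep\sigma_A$, gives the same identity but strictly speaking requires a short $\pi$-system/monotone-class justification or the conditional-independence calculus of Kallenberg, Chapter~8, so it is worth stating or citing that step precisely.
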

\begin{proof}[Proof of \propref{prop:indepA1}]
    From \cite{Malliavin1995}, note that for two $\sigma$-algebras $\calB_1$ and $\calB_2$,
    $$\calB_1 \indep \calB_2 \quad \implies \quad \calB_1 \cap \calB_2 = \sigma_{\emptyset}.$$
    Suppose that \Assuref{assu:a1} does not hold. Hence, in particular, for any $A \in \pset{D}$,
    $$\sigma_A \cap \sigma_{D \setminus A} \neq \sigma_{\emptyset}.$$
    It implies that $\sigma_A$ and $\sigma_{D \setminus A}$ cannot be independent. Hence, since this holds for any $A \in \pset{D}$, $X$ cannot be mutually independent. The result is proven by taking the opposite implication.
\end{proof}

\begin{prop}
    Let $X$ be a vector of random elements and suppose that \Assuref{assu:a1} holds. $X$ is mutually independent if and only if $\forall A, B \in \pset{D}$, $A \neq B$,
    $$\cd{\hLs{A}, \hLs{B}}=0.$$
    \label{prop:mutFried}
\end{prop}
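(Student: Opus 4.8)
The plan is to prove both implications separately, exploiting the fact that mutual independence is characterized by the vanishing of Dixmier's angle between complementary subsets, while the statement concerns Friedrichs' angles between \emph{arbitrary} pairs. The forward direction (mutual independence $\Rightarrow$ all Friedrichs' angles vanish) is the more substantive one. Suppose $X$ is mutually independent and fix $A, B \in \pset{D}$ with $A \neq B$. First I would invoke Proposition~\ref{prop:indepA1} to get that \Assuref{assu:a1} holds, so that $\hLs{A} \cap \hLs{B} = \hLs{A \cap B}$ by \thmref{thm:sidak}. By Lemma~\ref{lme:relaAngles}(6), $\cd{\hLs{A},\hLs{B}} = \cz{\hLs{A} \cap (\hLs{A \cap B})^\perp,\ \hLs{B}}$. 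The task is then to show this Dixmier's angle is $0$, i.e. that any centered-beyond-$X_{A\cap B}$ function of $X_A$ is uncorrelated with \emph{every} function of $X_B$. Writing $A \cap B = I$, $A = I \cup A'$, $B = I \cup B'$ with $A', B'$ disjoint and disjoint from $I$, the key point is that under mutual independence $X_{A'} \indep X_{B' \cup I}$ (this follows from the definition of mutual independence applied to the partition, possibly via grouping lemmas for independence, e.g. \cite{Malliavin1995}). Hence conditional expectations decouple, and for $x \in \hLs{A} \cap (\hLs{I})^\perp$ one has $\Ebb_I[x] = 0$; then for any $y \in \hLs{B}$, $\iprod{x,y} = \E{x\, y} = \E{\Ebb_B[x]\, y} = \E{\Ebb_I[x]\, y} = 0$, where $\Ebb_B[x] = \Ebb_I[x]$ because $x$ is a function of $X_{A'}$ (after subtracting its $X_I$-conditional part) which is independent of $X_B$ given $X_I$. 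This gives $\cd{\hLs{A},\hLs{B}} = 0$.

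For the converse, suppose \Assuref{assu:a1} holds and $\cd{\hLs{A},\hLs{B}} = 0$ for all $A \neq B$ in $\pset{D}$. Fix $A \in \pset{D}$; I must show $\sigma_A \indep \sigma_{D \setminus A}$, equivalently $\cz{\hLz{A},\hLz{D\setminus A}} = 0$ per the definition in \Secref{sec:mutIndep}. Since $A \cap (D \setminus A) = \emptyset$, \Assuref{assu:a1} gives $\hLs{A} \cap \hLs{D \setminus A} = \hLs{\emptyset}$, which consists of the a.s.\ constants. Then by Lemma~\ref{lme:relaAngles}(7), because $\hLz{A}$ and $\hLz{D \setminus A}$ are subspaces of $\hL_0$ whose intersection inside $\hLs{X}$ is $\{0\}$ (the centered constant), we have $\cz{\hLs{A},\hLs{D\setminus A}}$ restricted to the centered subspaces equals $\cd{\hLs{A},\hLs{D\setminus A}} = 0$; more directly, for centered $x \in \hLz{A}$ and centered $y \in \hLz{D\setminus A}$, both lie in $(\hLs{\emptyset})^\perp = (\hLs{A} \cap \hLs{D\setminus A})^\perp$, so $\absval{\iprod{x,y}} \leq \cd{\hLs{A},\hLs{D\setminus A}}\norm{x}\norm{y} = 0$ by the definition of Friedrichs' angle. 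Hence $\cz{\hLz{A},\hLz{D\setminus A}} = 0$, so $\sigma_A \indep \sigma_{D \setminus A}$; as $A$ was arbitrary, $X$ is mutually independent.

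The main obstacle I anticipate is the forward direction's reduction step: carefully justifying that mutual independence of the full family $(X_1, \dots, X_d)$ implies the conditional independence $X_{A'} \indep X_{B}$ given $X_{A \cap B}$ (equivalently, that $\Ebb_B$ and $\Ebb_{A \cap B}$ agree on functions of $X_{A'}$), which is what lets the inner product collapse. This requires a grouping/coalescing argument for independent families and a standard fact relating independence to the tower property of conditional expectations; it is routine measure theory but is the one place where the argument is not purely formal manipulation of angles. Everything else follows mechanically from \thmref{thm:sidak}, Proposition~\ref{prop:indepA1}, and parts (6)--(7) of Lemma~\ref{lme:relaAngles}.
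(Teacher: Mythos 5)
Your proposal is correct and follows essentially the same route as the paper: the converse direction is the paper's argument verbatim (under \Assuref{assu:a1} the Friedrichs angle between $\hLs{A}$ and $\hLs{D\setminus A}$ reduces to the maximal correlation between $\hLz{A}$ and $\hLz{D\setminus A}$, which vanishes iff $\sigma_A \indep \sigma_{D\setminus A}$), and the forward direction rests on the same mechanism, namely that mutual independence makes $\Ebb_B$ coincide with $\Ebb_{A\cap B}$ on $\hLs{A}$. The only difference is presentational: the paper cites the equivalence between commuting conditional expectations and a vanishing Friedrichs angle (Kallenberg), whereas you unpack that step by a direct inner-product computation via the grouping/conditional-independence argument (and your parenthetical that $x$ ``is a function of $X_{A'}$'' after centering is inaccurate as phrased, though the conditional-independence fact you actually invoke is the correct and sufficient one).
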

\begin{proof}[Proof of \propref{prop:mutFried}]
    Notice that, in general, if $B \subset A$, then $\cd{\hLs{A}, \hLs{B}}$ is necessarily equal to zero. Thus, we focus on the case where $A\cap B = C \not \in \{A,B\}$.
    Now, suppose that for any $A,B \in \pset{D}$, $\cd{\hLs{A}, \hLs{B}}= 0$. Hence, in particular, under \Assuref{assu:a1}, notice that for every $A \in \pset{D}$
    \begin{align*}
        \cd{\hLs{A}, \hLs{D \setminus A}}  &= \cz{\hLs{A} \cap \hLs{\emptyset}^\perp, \hLs{D \setminus A}\cap \hLs{\emptyset}^\perp} \\
        &= \cz{\hLz{A}, \hLz{D \setminus A}}.
    \end{align*}
    Thus, for every $A \in \pset{D}$,
    $$\cz{\hLz{A}, \hLz{D \setminus A}} = 0 ~ \iff ~ \sigma_A \indep \sigma_{D \setminus A},$$
    which is equivalent to $X$ being mutually independent.
    Now, suppose that $X$ is mutually independent, which implies that, for any $A,B \in \pset{D}$, with $A\cap B = C \not \in \{A,B\}$,
    $$\Ebb_{A} \circ \Ebb_{B} = \Ebb_{B} \circ \Ebb_{A} = \Ebb_{C},$$
    Thus, the orthogonal projections onto $\hLs{A}$ and $\hLs{B}$ commute, which is equivalent to (\see \cite{Kallenberg2021}) 
    $$\cd{\hLs{A}, \hLs{B}}= 0.$$    
\end{proof}

\begin{coro}
    Let $X$ be a vector of random elements and suppose that \Assuref{assu:a1} holds. $X$ is mutually independent if and only if its Feshchenko matrix $\Delta$ is the $\lrpar{2^d \times 2^d}$ identity matrix.
    \label{coro:deltaIdentite}
\end{coro}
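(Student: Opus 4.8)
The plan is to combine Proposition~\ref{prop:mutFried} with the structure of the Feshchenko matrix in Definition~\ref{def:MCPM}, using Corollary~\ref{coro:uniqDecomp} only implicitly. Assume \Assuref{assu:a1} holds throughout.

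First I would prove the forward implication. Suppose $X$ is mutually independent. By \propref{prop:mutFried}, for every $A,B \in \pset{D}$ with $A \neq B$ we have $\cd{\hLs{A}, \hLs{B}} = 0$. Looking at \defref{def:MCPM}, the diagonal entries of $\Delta$ are $1$ and the off-diagonal entry $\Delta(A,B)$ equals $-\cd{\hLs{A}, \hLs{B}} = 0$ for $A \neq B$. Hence every entry of $\Delta$ matches the corresponding entry of the $\lrpar{2^d \times 2^d}$ identity matrix, so $\Delta = I$.

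For the converse, suppose $\Delta = I$. Then by \defref{def:MCPM}, for every $A, B \in \pset{D}$ with $A \neq B$, the off-diagonal entry satisfies $\Delta(A,B) = -\cd{\hLs{A}, \hLs{B}} = 0$, i.e. $\cd{\hLs{A}, \hLs{B}} = 0$. Since this holds for all distinct pairs, \propref{prop:mutFried} (which is stated under \Assuref{assu:a1}) immediately yields that $X$ is mutually independent. This closes the equivalence.

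The argument is essentially a direct unwinding of the definitions, so there is no serious obstacle: the entire content has already been established in \propref{prop:mutFried}, and the corollary merely repackages the collection of scalar conditions $\{\cd{\hLs{A}, \hLs{B}} = 0 : A \neq B\}$ as the single matrix identity $\Delta = I$. If anything requires a word of care, it is noting that $\cd{\hLs{A}, \hLs{B}} = 0$ automatically holds whenever $B \subset A$ (as already observed in the proof of \propref{prop:mutFried}), so the off-diagonal entries indexed by comparable pairs are consistent with the claim without needing the independence hypothesis; the mutual independence assumption is exactly what forces the remaining (uncomparable-pair) entries to vanish as well.
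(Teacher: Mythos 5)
Your proof is correct and follows exactly the paper's route: the paper also deduces the corollary directly from \propref{prop:mutFried} together with the definition of $\Delta$ in \defref{def:MCPM}, and your argument simply spells out that unwinding in both directions. No gap to report.
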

\begin{proof}[Proof of \cororef{coro:deltaIdentite}]
    It is a direct consequence of \propref{prop:mutFried}, by definition of $\Delta$.
\end{proof}

Hence, if the inputs are mutually independent, both \Assuref{assu:a1} and \Assuref{assu:a2} hold and lead to the very particular case of $\Delta$ being the identity matrix. One has the following result when it comes to the resulting decomposition of $\hLs{X}$.
\begin{prop}
    Let $X$ be random inputs and suppose that \Assuref{assu:a1} holds. $X$ is mutually independent if and only if
    $$\forall A,B \in \pset{D}, B \neq A \quad V_A \perp V_B.$$
    \label{prop:fullOrtho}
\end{prop}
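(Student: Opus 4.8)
The plan is to prove both implications separately, leaning on the machinery already established. For the forward direction, suppose $X$ is mutually independent. By \propref{prop:mutFried}, under \Assuref{assu:a1} this is equivalent to $\cd{\hLs{A},\hLs{B}}=0$ for all distinct $A,B\in\pset{D}$, which by \cororef{coro:deltaIdentite} means the Feshchenko matrix $\Delta$ is the identity; in particular \Assuref{assu:a2} holds and \thmref{thm:thm} applies, so the decomposition $\hLs{X}=\bigoplus_{B\in\pset{D}}V_B$ exists. I would then show orthogonality of the $V_B$ by the same quadratic-form argument used at the end of the proof of \thmref{thm:thm}: for distinct $B,C$, \lmeref{lme:step1} (whose hypothesis $V_B,V_C\subseteq[\hLs{B\cap C}]^\perp$ holds by construction, as verified in the induction step) gives $\cz{V_B,V_C}\le\cd{\hLs{B},\hLs{C}}=0$, so $V_B\perp V_C$. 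This is the easy direction.

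For the converse, suppose that $V_A\perp V_B$ for all distinct $A,B\in\pset{D}$. The key observation is that each $\hLz{A}$ decomposes as $\hLz{A}=\bigoplus_{B\in\pset{A},B\neq\emptyset}V_B$: indeed $\hLs{A}=\bigoplus_{B\in\pset{A}}V_B$ by \thmref{thm:thm} (one must first check \Assuref{assu:a2} holds, or alternatively note the $V_B$'s are defined regardless and the pairwise-orthogonality hypothesis itself forces the sum to be direct and closed), and $V_\emptyset=\hLs{\emptyset}$ splits off the constants, leaving the centered part. Then for any $A\in\pset{D}$, write any $f\in\hLz{A}$ and $g\in\hLz{D\setminus A}$ in terms of their components in the $V_B$'s: $f=\sum_{B\in\pset{A},B\neq\emptyset}f_B$ with $f_B\in V_B$, and $g=\sum_{C\in\pset{D\setminus A},C\neq\emptyset}g_C$ with $g_C\in V_C$. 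Since $A$ and $D\setminus A$ are disjoint, every such $B$ is distinct from every such $C$, so the global orthogonality hypothesis gives $\iprod{f_B,g_C}=0$ for all pairs, whence $\iprod{f,g}=0$. Therefore $\cz{\hLz{A},\hLz{D\setminus A}}=0$, i.e. $\sigma_A\indep\sigma_{D\setminus A}$ for every $A\in\pset{D}$, which is exactly mutual independence of $X$.

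The main obstacle is a subtle point in the converse: one needs \thmref{thm:thm} to be applicable in order to use the direct-sum decomposition $\hLs{A}=\bigoplus_{B\in\pset{A}}V_B$, but a priori the hypothesis of the proposition only assumes \Assuref{assu:a1} together with the orthogonality of the $V_B$, not \Assuref{assu:a2} directly. I would resolve this by observing that pairwise orthogonality of the $V_B$ makes the sum $\sum_{B\in\pset{A}}V_B$ automatically direct and closed (an orthogonal sum of closed subspaces is closed), so the inductively-defined $V_B$ do span $\hLs{A}$ and the decomposition is valid without invoking \Assuref{assu:a2} — alternatively, one can note $V_B\perp V_C$ combined with \lmeref{lme:step1} run in reverse would force the relevant Friedrichs angles to vanish and hence $\Delta=I$, recovering \Assuref{assu:a2}. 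Either route closes the gap; the rest is the routine bilinearity computation sketched above.
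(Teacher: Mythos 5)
Your forward direction is essentially the paper's: under \Assuref{assu:a1}, mutual independence gives $\cd{\hLs{A},\hLs{B}}=0$ for all distinct $A,B$ (\propref{prop:mutFried}), and \lmeref{lme:step1} then yields $\cz{V_A,V_B}\leq \cd{\hLs{A},\hLs{B}}=0$; your extra care in noting that $\Delta=I$ restores \Assuref{assu:a2} so that the containments $V_B\subseteq[\hLs{B\cap C}]^\perp$ from the induction of \thmref{thm:thm} are actually available is a welcome tightening. Where you genuinely diverge is the converse. The paper's proof only runs the chain ``$\cd{\hLs{A},\hLs{B}}=0 \Rightarrow \cz{V_A,V_B}=0$'' through the one-sided inequality of \lmeref{lme:step1}, so the implication from pairwise orthogonality of the $V_A$ back to mutual independence is left essentially implicit. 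Your argument supplies it: pairwise orthogonality of the (always well-defined, closed) $V_B$ makes each inductive sum $\bigplus_{Z\in\pset{-A}}V_Z$ closed, so the recursion alone gives $\hLs{A}=\bigoplus_{B\in\pset{A}}V_B$ and hence $\hLz{A}=\bigoplus_{B\in\pset{A},B\neq\emptyset}V_B$ without \Assuref{assu:a2}; expanding $f\in\hLz{A}$ and $g\in\hLz{D\setminus A}$ in these components and using disjointness of $A$ and $D\setminus A$ gives $\cz{\hLz{A},\hLz{D\setminus A}}=0$ for every $A$, which is exactly the paper's definition of mutual independence. This buys a complete two-sided proof where the paper's is terse, at the cost of re-running a short closedness induction. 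One caveat: your proposed ``alternative'' repair, running \lmeref{lme:step1} in reverse to force $\cd{\hLs{B},\hLs{C}}=0$ and hence $\Delta=I$, does not work, since the lemma is an inequality $\cz{V_B,V_C}\leq\cd{\hLs{B},\hLs{C}}$ and vanishing of the left-hand side says nothing about the right-hand side; only your first route (orthogonal sums of closed subspaces are closed) closes the gap, so the sentence ``either route closes the gap'' should be dropped.
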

\begin{proof}[Proof of \propref{prop:fullOrtho}]
    Notice that, in general, if \Assuref{assu:a1} hold, one has that for any $A,B \in \pset{D}$, $B \neq A$
    $$\cz{V_A, V_B} \leq \cd{\hLs{A}, \hLs{B}}.$$
    Note that, from \propref{prop:indepA1}, \Assuref{assu:a1} holds for a mutually independent $X$. Moreover, notice from \propref{prop:mutFried} that $X$ is mutually independent if and only if, $\forall A,B \in \pset{D}$, $A \neq B$, $\cd{\hLs{A}, \hLs{B}} = 0$, thus necessarily $\cz{V_A, V_B} =0$, which is equivalent to $V_A \perp V_B$.
\end{proof}

\propref{prop:fullOrtho} is, in fact, equivalent to the Hoeffding functional decomposition for mutually independent inputs, which can be seen as a very particular case of \thmref{thm:thm} where $X$ admits a Feshchenko matrix equal to the identity, and provided \Assuref{assu:a1} holds. In this case, the subspaces $V_A$ are all pairwise orthogonal, and the \emph{canonical projectors are orthogonal projectors}. \thmref{thm:thm} can thus be seen as a generalization of Hoeffding's classical decomposition to inputs with Feshchenko matrices \emph{that differ from the identity}.

\section{Sensitivity analyses with dependent inputs}\label{sec:decomps}

This section is dedicated to the \emph{decompositions of quantities of interest (QoIs)} ensuing from the orthocanonical decomposition of \cororef{coro:uniqDecomp}. We focus on two QoIs: an evaluation (\ie, prediction) of a model and its variance.

\subsection{Orthocanonical evaluation decomposition}
For $\omega \in \Omega$, denote $x= X(\omega ) \in E$ a realisation of $X$. Subsequently, denote $G(x) \in \R$ the evaluation on $x$ of a random output $G(X) \in \hLs{X}$. In the XAI literature, ``explanation'' methods aim at decomposing $G(x)$ into parts for which each input is responsible \citep{Barredo2020}. They often rely on cooperative game theory, particularly on the Shapley values \citep{Shapley1951}, an allocation with seemingly reasonable properties \citep{Lundberg2017}. However, allocations can be understood as aggregations of coalitional decompositions \citep{ilidrissi2023}, which can be trivially chosen. However, \thmref{thm:thm}, and in particular \cororef{coro:uniqDecomp} offers an orthocanonical approach.

\begin{dfi}[Orthocanonical decomposition of an evaluation]
    Let $X = (X_1, \dots, X_d)$ be a vector of random elements, let $G(X)$ be in $\hLs{X}$ and assume that Assumptions~\ref{assu:a1} and \ref{assu:a2} hold. For any $\omega \in \Omega$, denote $x=X(\omega)$. The orthocanonical coalitional decomposition of the evaluation $G(x)$ is defined as
    $$G(x) = \sum_{A \in \pset{D}} G_A(x_A),$$
    where $x_A = X_A(\omega)$, and 
    $$G_A(x_A) = Q_A\lrpar{G(x)} = \sum_{B \in \pset{A}} (-1)^{|A|-|B|}\Mbb_B\lrbra{G(x)},$$
    where $Q_A$ is the projection onto $V_A$ parallel to $W_A$ and $\Mbb_A$ is the projection onto $\hLs{A}$ parallel to $\widecheck{W}_A$. 
\end{dfi}

The usual coalitional decomposition of choice, even for dependent inputs, relies on choosing conditional expectations (also known as ``conditional Shapley values'') \citep{Lundberg2017}. However, the following results show that this choice entails a canonical decomposition if and only if the inputs are mutually independent.
\begin{prop} \label{prop:condiShap}
     Let $X = (X_1,\dots, X_d)$ be a vector of random elements, let $G(X)$ be in $\hLs{X}$, and assume that Assumptions~\ref{assu:a1} and \ref{assu:a2} hold. Then,
     $$G_A(x_A) = \sum_{B \in \pset{A}} (-1)^{|A|-|B|}\Ebb_B\lrbra{G(x)}, \quad \forall A \in \pset{D}$$
     if and only if $X$ is mutually independent.
\end{prop}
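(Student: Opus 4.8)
The plan is to recast the claimed identity as the assertion that, for every $A\in\pset{D}$, the oblique projector $\Mbb_A$ coincides with the orthogonal projector $\Ebb_A$, and then to recognize that coincidence as a reformulation of mutual independence. By \propref{prop:mobius} the family $\lrpar{G_A}_{A\in\pset{D}}$, with $G_A=Q_A\lrpar{G(X)}$, is the Möbius transform over the Boolean lattice $\pset{D}$ of the family $\lrpar{\Mbb_A\lrbra{G(X)}}_{A\in\pset{D}}$, the inverse relation being $\Mbb_A\lrbra{G(X)}=\sum_{B\in\pset{A}}Q_B\lrpar{G(X)}$. In exactly the same way, classical Möbius inversion shows that the family appearing on the right-hand side of the stated identity is the Möbius transform of $\lrpar{\Ebb_A\lrbra{G(X)}}_{A\in\pset{D}}$. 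Since the Möbius transform is injective, the stated identity holding for every $A\in\pset{D}$ is equivalent to $\Mbb_A\lrbra{G(X)}=\Ebb_A\lrbra{G(X)}$ for every $A\in\pset{D}$; imposing this for all $G(X)\in\hLs{X}$ amounts to the operator identity $\Mbb_A=\Ebb_A$, equivalently $\widecheck{W}_A=\Ker{\Mbb_A}$ equals $\Ker{\Ebb_A}=\hLs{A}^\perp$, for every $A\in\pset{D}$.

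For the ``if'' direction, suppose $X$ is mutually independent. Then \Assuref{assu:a1} holds by \propref{prop:indepA1}, so \propref{prop:fullOrtho} gives that the subspaces $\lrpar{V_B}_{B\in\pset{D}}$ are pairwise orthogonal. Hence the direct sums $\hLs{X}=\bigoplus_{B\in\pset{D}}V_B$ and $\hLs{A}=\bigoplus_{B\in\pset{A}}V_B$ of \thmref{thm:thm} are orthogonal direct sums, which forces $\hLs{A}^\perp=\bigoplus_{B\in\pset{D},\,B\notin\pset{A}}V_B=\widecheck{W}_A$. Consequently $\Mbb_A=\Ebb_A$ for every $A\in\pset{D}$, and the claimed identity follows at once from \propref{prop:mobius}.

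For the ``only if'' direction, assume the identity holds for every $G(X)\in\hLs{X}$, so by the reduction above $\widecheck{W}_A=\hLs{A}^\perp$ for every $A\in\pset{D}$. Fix distinct $B,C\in\pset{D}$. If $B$ and $C$ are comparable, then $V_B\perp V_C$ by the hierarchical orthogonality of \propref{prop:hierOrth}. If they are incomparable, then $B\notin\pset{C}$, hence $V_B\subseteq\widecheck{W}_C=\hLs{C}^\perp$ while $V_C\subseteq\hLs{C}$, so again $V_B\perp V_C$. Thus $V_A\perp V_B$ for all distinct $A,B\in\pset{D}$, and since \Assuref{assu:a1} holds by hypothesis, \propref{prop:fullOrtho} yields that $X$ is mutually independent.

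The crux is the reduction of the first paragraph, and in particular the role of its two quantifiers: the identity must be imposed for every $A\in\pset{D}$ so that Möbius inversion can be run backwards to obtain $\Mbb_A\lrbra{G(X)}=\Ebb_A\lrbra{G(X)}$ for all $A$, and for every output $G(X)\in\hLs{X}$ so that this pointwise equality can be promoted to the operator identity $\Mbb_A=\Ebb_A$; a single $G$, for instance a constant, satisfies the identity trivially and carries no information about the dependence structure. Once $\widecheck{W}_A=\hLs{A}^\perp$ is in hand, the remainder is a direct application of \propref{prop:hierOrth} and \propref{prop:fullOrtho}, with only the orthogonal-direct-sum bookkeeping identifying $\widecheck{W}_A$ with $\hLs{A}^\perp$ left to check.
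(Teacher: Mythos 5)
Your proof is correct and follows essentially the same route as the paper: both reduce the stated identity to the operator identity $\Mbb_A = \Ebb_A$, equivalently $\widecheck{W}_A = \hLs{A}^\perp$ for every $A \in \pset{D}$, and then invoke \propref{prop:fullOrtho} to translate this into mutual independence. The only difference is that you make explicit steps the paper leaves implicit --- the Möbius-inversion reduction via \propref{prop:mobius}, the comparable/incomparable case analysis showing that $\widecheck{W}_A = \hLs{A}^\perp$ for all $A$ amounts to pairwise orthogonality of the $V_B$, and the need to quantify over all $G(X) \in \hLs{X}$ (the statement as written fixes a single $G$, for which the ``only if'' direction would fail, e.g.\ for constant $G$) --- which is a faithful and slightly more careful reading of the paper's argument.
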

\begin{proof}[Proof of \propref{prop:condiShap}]
    First, notice that $\Mbb_A = \Ebb_A$ if and only if $\widecheck{W}_A$ is the orthogonal complement of $\hLs{A}$. One can notice that, $\widecheck{W}_A$ is a complement of $\hLs{A}$ in $\hLs{X}$, and from \propref{prop:fullOrtho}, one has that
    $$\hLs{A} = \bigoplus_{B \in \pset{A}} V_B \perp \widecheck{W}_A = \bigoplus_{B \in \pset{D}, B \not \in \pset{A}} V_B,$$
    holds for every $A \in \pset{D}$ if and only if $X$ is mutually independent. In this case, $\widecheck{W}_A$ is an orthogonal complement of $\hLs{A}$, and by uniqueness, $\widecheck{W}_A = \hLs{A}^\perp$, and thus $\Mbb_A = \Ebb_A$.
\end{proof}

\subsection{Variance decomposition}
Let $G(X)$ be a random output and denote its variance by
$$\V{G(X)} = \E{\lrpar{G(X) - \E{G(X)}}^2} = \E{G(X)^2} - \E{G(X)}^2.$$

We propose two ways to approach the problem of decomposing $\V{G(X)}$. The \emph{orthocanonical variance decomposition} relies on the decomposition of $G(X)$ offered by \cororef{coro:uniqDecomp}. In contrast, the \emph{organic variance decomposition} aims at defining and disentangling \emph{pure interaction effects} from \emph{dependence effects}. 

\subsubsection{Orthocanonical variance decomposition}
In light of \cororef{coro:uniqDecomp}, the orthocanonical variance decomposition of $G(X)$ is rather intuitive. It relies on the following rationale:
\begin{align*}
    \V{G(X)} &= \Cov{G(X), G(X)} \\
    &= \sum_{A \in \pset{D}} \Cov{G_A(X_A), G(X)} \\
    &= \sum_{A \in \pset{D}} \lrbra{\V{G_A(X_A)} + \sum_{B \in \calU_A} \Cov{G_A(X_A), G_B(X_B)}}.
\end{align*}
reminiscent of the ``covariance decomposition'' \citep{Stone1994, Chastaing2012, Hart2018, DaVeiga2021}. Two indices arise from this decomposition. 

\begin{dfi}[Orthocanonical variance decomposition]
    We place ourselves in the framework of \thmref{thm:thm}. For any $A \in \pset{D}$, define
    \begin{itemize}
        \item The \emph{structural contribution of $X_A$ to $G(X)$} by
        $$S_A^U \eqdef \V{G_A(X_A)};$$
        \item The \emph{correlative contribution of $X_A$ to $G(X)$} by
        $$S_A^C \eqdef \sum_{B \in \calU_A} \Cov{G_A(X_A), G_B(X_B)}.$$
    \end{itemize} 
\end{dfi}
The orthocanonical decomposition of $\V{G(X)}$ is suitable in practice if the dependence structure of $X$ is assumed to be inherent in the modeling of the studied phenomenon. These indices can be expressed using canonical oblique projections (\see \Secref{sec:oblProj}).
\begin{prop}\label{prop:projCorrel}
    We place ourselves in the framework of \thmref{thm:thm}. Then, for any $A \in \pset{D}$
    $$S_A^C = \sum_{B \in \pset{A}} (-1)^{|A|-|B|} \Cov{\Mbb_B\lrbra{G(X)}, \lrpar{I-\Mbb_A}\lrbra{G(X)}}.$$
\end{prop}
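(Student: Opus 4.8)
The plan is to rewrite $S_A^C$ as a single covariance of $G_A(X_A)$ against $\lrpar{I-\Mbb_A}\lrbra{G(X)}$, and then to expand $G_A(X_A)=Q_A(G(X))$ via \propref{prop:mobius}.

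First I would recall from \cororef{coro:uniqDecomp} that $G(X)=\sum_{B\in\pset{D}}G_B(X_B)$ with $G_B(X_B)=Q_B(G(X))\in V_B$, and from \propref{prop:hierOrth} that $V_A\perp V_B$ whenever $B$ is comparable to $A$ with $B\neq A$ (for the proper-superset case one applies \propref{prop:hierOrth} with the two sets exchanged). Since every $G_B(X_B)$ with $B\neq\emptyset$ is centered (because $V_B\perp\hLs{\emptyset}$, as established in the proof of \thmref{thm:thm}) and $G_\emptyset$ is a.s.\ constant, this orthogonality translates into $\Cov{G_A(X_A),G_B(X_B)}=0$ for every such $B$.

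Next, by definition of $\Mbb_A$ one has $\lrpar{I-\Mbb_A}\lrbra{G(X)}=\sum_{B\in\pset{D}\setminus\pset{A}}G_B(X_B)$, and $\pset{D}\setminus\pset{A}$ decomposes disjointly into the incomparable sets $\calU_A$ and the proper supersets $\lrcubra{B:A\subset B}$. Expanding the covariance by bilinearity and discarding the proper-superset terms (which vanish by the previous step) yields
$$\Cov{G_A(X_A),\lrpar{I-\Mbb_A}\lrbra{G(X)}}=\sum_{B\in\calU_A}\Cov{G_A(X_A),G_B(X_B)}=S_A^C.$$
Finally I substitute $G_A(X_A)=Q_A(G(X))=\sum_{B\in\pset{A}}(-1)^{|A|-|B|}\Mbb_B\lrbra{G(X)}$ from \propref{prop:mobius} into the left-hand side and distribute the covariance over this finite sum, which is exactly the asserted identity.

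The whole argument is essentially combinatorial bookkeeping; the only point requiring care is the partition $\pset{D}\setminus\pset{A}=\calU_A\cup\lrcubra{B:A\subset B}$ together with the verification that hierarchical orthogonality indeed kills the proper-superset block. No closedness or positive-definiteness input beyond what is already packaged in \cororef{coro:uniqDecomp} and \propref{prop:hierOrth} is needed.
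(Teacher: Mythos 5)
Your proposal is correct and follows essentially the same route as the paper's own proof: expand $(I-\Mbb_A)[G(X)]$ as the sum of the components $G_B(X_B)$ over $B\notin\pset{A}$, kill the proper-superset terms via hierarchical orthogonality so that only the $\calU_A$-block (i.e.\ $S_A^C$) survives, and then apply the M\"obius formula of \propref{prop:mobius} for $Q_A$ together with bilinearity of the covariance. Your explicit remark that centeredness of the nonconstant components is what turns orthogonality into vanishing covariances is a small point the paper leaves implicit, but the argument is the same.
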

\begin{proof}[Proof of \propref{prop:projCorrel}]
    First, recall that, for any $A \in \pset{D}$,
    $$G_A(X_A) = \sum_{B \in \pset{A}} (-1)^{|A|-|B|} \Mbb_B\lrbra{G(X)},$$
    and hence,
    \begin{align*}
        \sum_{B \in \pset{A}} (-1)^{|A|-|B|} \Cov{\Mbb_B\lrbra{G(X)}, \lrpar{I-\Mbb_A}\lrbra{G(X)}} &= \Cov{G_A(X_A),  \lrpar{I-\Mbb_A}\lrbra{G(X)}} \\
        &= \sum_{B \in \pset{D} : B \not \in \pset{A}} \Cov{G_A(X_A),  G_B(X_B)}.
    \end{align*}
    However, notice that $\calU_A \subset \pset{D} \setminus \pset{A}$, and that, for any $B \in \pset{D} \setminus \pset{A}$ with $B \not \in \calU_A$,
    $$\Cov{G_A(X_A), G_B(X_B)} = 0,$$
    and hence,
    \begin{align*}
        \sum_{B \in \pset{A}} (-1)^{|A|-|B|} \Cov{\Mbb_B\lrbra{G(X)}, \lrpar{I-\Mbb_A}\lrbra{G(X)}} &= \sum_{B \in \pset{D} : B \not \in \pset{A}} \Cov{G_A(X_A),  G_B(X_B)} \\
        &= \sum_{B \in \calU_A} \Cov{G_A(X_A),  G_B(X_B)} \\
        &= S_A^C.
    \end{align*}
\end{proof}

\begin{prop}\label{prop:projStructu}
     We place ourselves in the framework of \thmref{thm:thm}. Then, for any $A \in \pset{D}$
    $$S_A^U = \sum_{B \in \pset{A}} (-1)^{|A|-|B|} \lrbra{\V{\Mbb_B\lrbra{G(X)}} - \Cov{\Mbb_B\lrbra{G(X)}, \lrpar{I-\Mbb_A}\lrbra{G(X)}}}.$$
\end{prop}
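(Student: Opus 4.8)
The plan is to reduce the claim to \propref{prop:projCorrel} together with the variance identity $\V{G(X)} = \sum_{A \in \pset{D}}\lrpar{S_A^U + S_A^C}$ recorded above. Writing the stated right-hand side as
$$\sum_{B \in \pset{A}}(-1)^{|A|-|B|}\V{\Mbb_B\lrbra{G(X)}} \;-\; \sum_{B \in \pset{A}}(-1)^{|A|-|B|}\Cov{\Mbb_B\lrbra{G(X)},(I-\Mbb_A)\lrbra{G(X)}},$$
\propref{prop:projCorrel} identifies the second sum with $S_A^C$, so it suffices to prove that the first sum equals $\Cov{G_A(X_A),G(X)}$; this in turn equals $S_A^U + S_A^C$ by \propref{prop:hierOrth}, since $\Cov{G_A(X_A),G_B(X_B)}=0$ for every $B \in \calC_A \setminus \{A\}$.

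To handle the first sum I would write $\V{\Mbb_B\lrbra{G(X)}}=\Cov{\Mbb_B\lrbra{G(X)},\Mbb_B\lrbra{G(X)}}$ and substitute $\Mbb_B\lrbra{G(X)}=\sum_{C \in \pset{B}}G_C(X_C)$, using the definition of $\Mbb_B$ and the orthocanonical decomposition of \cororef{coro:uniqDecomp}. This turns it into $\sum_{B \in \pset{A}}\sum_{C,C' \in \pset{B}}(-1)^{|A|-|B|}\Cov{G_C(X_C),G_{C'}(X_{C'})}$. Interchanging the order of summation, the inner alternating sum over the $B$ with $C \cup C' \subseteq B \subseteq A$ equals $(1-1)^{|A \setminus (C \cup C')|}$ by the binomial theorem, hence vanishes unless $C \cup C' = A$; the first sum therefore collapses to $\sum_{C \cup C' = A}\Cov{G_C(X_C),G_{C'}(X_{C'})}$.

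Next I would invoke hierarchical orthogonality (\propref{prop:hierOrth}): if $C$ and $C'$ are comparable then $G_C(X_C) \perp G_{C'}(X_{C'})$, so among the pairs with $C \cup C' = A$ only the diagonal pair $C=C'=A$ and the incomparable pairs of proper subsets of $A$ with union $A$ survive. The diagonal term contributes exactly $\V{G_A(X_A)} = S_A^U$, and the remaining incomparable-pair covariances must be reconciled, through a direct combinatorial count, with the off-diagonal part of $\Cov{G_A(X_A),G(X)}$ obtained by expanding $G(X)$ via \cororef{coro:uniqDecomp}.

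The step I expect to be the main obstacle is precisely this last reconciliation. Because the operators $\Mbb_B$ are \emph{oblique} projectors, $\Cov{\Mbb_B\lrbra{G(X)},G(X)}$ is not equal to $\V{\Mbb_B\lrbra{G(X)}}$ in general, so the telescoping that would be automatic for the orthogonal projectors $\Ebb_B$ is unavailable, and one has to track carefully how the alternating sum over $B$ regroups the covariances between incomparable summands $G_C(X_C)$ and $G_{C'}(X_{C'})$. A reformulation that may be cleaner to manipulate is $S_A^U = \sum_{B \in \pset{A}}(-1)^{|A|-|B|}\Cov{\Mbb_B\lrbra{G(X)},\Mbb_A\lrbra{G(X)}}$, which follows at once from $S_A^U = \Cov{G_A(X_A),\Mbb_A\lrbra{G(X)}}$ (a consequence of \propref{prop:hierOrth}) together with the Möbius inversion of \propref{prop:mobius}.
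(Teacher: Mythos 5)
Your reduction and your expansion are both correct, and the obstacle you flag at the end is not a delicate step you failed to finish: it is a genuine failure of the stated identity. As you computed, after removing $S_A^C$ via \propref{prop:projCorrel} one is left with needing
$$\sum_{B \in \pset{A}} (-1)^{|A|-|B|}\V{\Mbb_B\lrbra{G(X)}} \;=\; S_A^U + S_A^C,$$
while the left-hand side collapses (exactly as in your binomial argument) to $S_A^U$ plus the covariances $\Cov{G_C(X_C),G_{C'}(X_{C'})}$ over incomparable \emph{proper} subsets $C,C'$ of $A$ with $C\cup C'=A$. These are not the terms appearing in $S_A^C$, which involves $G_A(X_A)$ itself against summands indexed by sets incomparable to $A$ (hence not contained in $A$), and no combinatorial count reconciles the two. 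Concretely, for $A=\{1\}$ with $d=2$ the surviving off-diagonal terms are empty and the printed right-hand side equals $\V{G_1(X_1)}-\Cov{G_1(X_1),G_2(X_2)}$, whereas $S_{\{1\}}^U=\V{G_1(X_1)}$; similarly for $A=D$, $d=2$, the right-hand side is $\V{G_{12}(X)}+2\Cov{G_1(X_1),G_2(X_2)}$. Since $\Cov{G_1(X_1),G_2(X_2)}$ has no reason to vanish under dependence, the proposition as printed cannot be proven. For comparison, the paper's own proof rests on the claims $\V{\Mbb_A\lrbra{G(X)}}=\sum_{B\in\pset{A}}\lrpar{S_B^U+S_B^C}$ and $\sum_{B\in\pset{A}}S_B^C=\Cov{\Mbb_A\lrbra{G(X)},\lrpar{I-\Mbb_A}\lrbra{G(X)}}$, both of which silently drop exactly those within-$A$ incomparable covariances (the second already fails at $A=D$, where its right side is zero); your analysis is sharper than the printed argument on this point.

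Your closing reformulation is the correct version of the statement, and your sketch already proves it: for $B\subseteq A$ one has $\Cov{\Mbb_B\lrbra{G(X)},\Mbb_A\lrbra{G(X)}}=\sum_{C\in\pset{B}}\Cov{G_C(X_C),\Mbb_A\lrbra{G(X)}}$, so Rota/M\"obius inversion over $\pset{A}$ gives $\sum_{B\in\pset{A}}(-1)^{|A|-|B|}\Cov{\Mbb_B\lrbra{G(X)},\Mbb_A\lrbra{G(X)}}=\Cov{G_A(X_A),\Mbb_A\lrbra{G(X)}}=\V{G_A(X_A)}=S_A^U$, the last equality by hierarchical orthogonality (\propref{prop:hierOrth}). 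Note that the printed right-hand side equals your corrected expression minus $\sum_{B\in\pset{A}}(-1)^{|A|-|B|}\Cov{\Mbb_B\lrbra{G(X)},\lrpar{I-\Mbb_B}\lrbra{G(X)}}$, which is nonzero in general; so what you have produced is a valid replacement for the proposition rather than a proof of it as stated.
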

\begin{proof}[Proof of \propref{prop:projStructu}]
    First, recall that
    $$\Mbb_A\lrbra{G(X)} = \sum_{B \in \pset{A}} G_B(X_B).$$
    Thus, 
    \begin{align*}
        \V{\Mbb_A\lrbra{G(X)}} &= \V{\sum_{B \in \pset{A}} G_B(X_B)} = \sum_{B \in \pset{A}} \V{G_B(X_B)} + \sum_{C \in \calU_A} \Cov{G_B(X_B), G_C(X_C)} \\
        &= \sum_{B \in \pset{A}} S_B^U + S_B^C
    \end{align*}
    which is equivalent to
    $$\V{\Mbb_A\lrbra{G(X)}} - \sum_{B \in \pset{A}} S_B^C= \sum_{B \in \pset{A}} S_B^U.$$
    However, notice that, $\forall A \in \pset{D}$,
    $$\sum_{B \in \pset{A}} S_B^C = \Cov{\Mbb_A\lrbra{G(X}), \lrpar{I-\Mbb_A}\lrbra{G(X)}},$$
    and thus, $\forall A \in \pset{D}$,
    $$\V{\Mbb_A\lrbra{G(X)}} - \Cov{\Mbb_A\lrbra{G(X)}, \lrpar{I-\Mbb_A}\lrbra{G(X)}} = \sum_{B \in \pset{D}} S_B^U.$$
    Using Rota's generalization of the Möbius inversion formula applied to the power-set, it yields that, $\forall A \in \pset{D}$,
    $$ S_A^U = \sum_{B \in \pset{A}} (-1)^{|A|-|B|}\lrbra{\V{\Mbb_B\lrbra{G(X)}} - \Cov{\Mbb_B\lrbra{G(X)}, \lrpar{I-\Mbb_A}\lrbra{G(X)}}}.$$
\end{proof}

\subsubsection{Organic variance decomposition}
The goal of the \emph{organic variance decomposition} is to separate ``pure interaction effects'' to ``dependence effects''. Denote by $\tildeX=(\tildeX_1, \dots, \tildeX_d)$ the mutually independent version of $X$, \ie the $E$-valued random element with the same univariate marginals as $X$, but such that $\tildeX$ is mutually independent. Suppose additionally that $G(X) \in \hLs{X}$ and $G\lrpar{\tildeX} \in \hLs{\tildeX}$.

\begin{dfi}[Pure interaction effect]
    We place ourselves in the framework of \thmref{thm:thm}. For any $A \in \pset{D}$, the \emph{pure interaction effects of $X_A$ on $G(X)$} are defined as
    $$S_A = \frac{\V{\tildeG_A\lrpar{\tildeX_A}}}{\V{G\lrpar{\tildeX}}}\V{G(X)}.$$
\end{dfi}
These indices are the Sobol' indices of $G\lrpar{\tildeX}$ \citep{Sobol2001}, which are known in the literature as quantifying pure interaction \citep{DaVeiga2021}. They are analogous to the work of \cite{Mara2015}.
\begin{rmk}
    In certain situations, when $X$ is from a certain family of random vectors, it is possible to find a simple mapping $T : E \rightarrow E$ such that
    $$\tildeX = T(X).$$
    In particular, if $P_X$ is in the family of elliptical distribution, it amounts to performing a Nataf transform of the inputs \citep{Lebrun2009a, Lebrun2009b}.
\end{rmk}

One desirability criterion can be brought forward to define dependence effects: the set of indices must all be equal to zero if and only if $X$ is mutually independent.
\begin{lme}\label{lme:oblOrth}
    We place ourselves in the framework of \thmref{thm:thm}. Let $G(X) \in \hLs{X}$. Then, 
    $$Q_A\lrpar{G(X)} = P_A\lrpar{G(X)} \text{ a.s.}, \quad \forall A \in \pset{D} \quad \iff \quad X \text{ is mutually independent.}$$
\end{lme}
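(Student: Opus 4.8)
The plan is to reduce the operator identity $Q_A = P_A$ to a statement about nullspaces and then invoke \propref{prop:fullOrtho}. Throughout I read the claimed equality as holding for every $G(X)\in\hLs{X}$, i.e. as the operator identity $Q_A = P_A$ on $\hLs{X}$; both operators are bounded idempotents with common range $\Ran{Q_A}=\Ran{P_A}=V_A$, and they differ only through their nullspaces, $\Ker{Q_A}=W_A=\bigoplus_{B\in\pset{D},\, B\neq A}V_B$ and $\Ker{P_A}=V_A^{\perp}$, the orthogonal complement being taken in $\hLs{X}$. Since two bounded projectors with the same range coincide exactly when their nullspaces coincide, for each fixed $A\in\pset{D}$ one has $Q_A=P_A$ if and only if $W_A=V_A^{\perp}$.

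Next I would show that, for a fixed $A$, $W_A=V_A^{\perp}$ is equivalent to $V_A\perp V_B$ for all $B\neq A$. If $V_A\perp V_B$ for every $B\neq A$, then $W_A\subseteq V_A^{\perp}$; conversely, using the direct-sum decomposition $\hLs{X}=V_A\oplus W_A$ underlying the definition of $Q_A$ (equivalently \thmref{thm:thm} with $A=D$) together with $\hLs{X}=V_A\oplus V_A^{\perp}$, any $x\in V_A^{\perp}$ writes as $x=v+w$ with $v\in V_A$ and $w\in W_A\subseteq V_A^{\perp}$, whence $v=x-w\in V_A\cap V_A^{\perp}=\{0\}$ and $x=w\in W_A$; thus $V_A^{\perp}\subseteq W_A$ and $W_A=V_A^{\perp}$. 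In the other direction, $W_A=V_A^{\perp}$ forces $V_B\subseteq W_A=V_A^{\perp}$ for every $B\neq A$, i.e. $V_A\perp V_B$.

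Combining the two steps, $Q_A=P_A$ for every $A\in\pset{D}$ if and only if $V_A\perp V_B$ for every pair $A\neq B$ in $\pset{D}$. We are in the framework of \thmref{thm:thm}, so \Assuref{assu:a1} holds and \propref{prop:fullOrtho} applies, giving that the pairwise orthogonality of $(V_A)_{A\in\pset{D}}$ is equivalent to $X$ being mutually independent; this is exactly the asserted equivalence. Note that the backward implication ($X$ mutually independent $\Rightarrow$ $Q_A=P_A$) is the easy half: \propref{prop:fullOrtho} turns $\hLs{X}=\bigoplus_{B}V_B$ into an orthogonal direct sum, so $W_A=V_A^{\perp}$ and $Q_A=P_A$ by construction.

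The only mildly delicate point is the complement argument in the second step — that, of two complements of $V_A$ in $\hLs{X}$ with one contained in the other, they must coincide — which is handled above via the explicit decomposition and does not require finite dimension; everything else is bookkeeping on top of \propref{prop:fullOrtho}.
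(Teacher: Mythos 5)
Your proof is correct and takes essentially the same route as the paper's: for each $A$ you reduce $Q_A = P_A$ to the identification of nullspaces $W_A = V_A^{\perp}$, i.e.\ to $V_A \perp V_B$ for all $B \neq A$, and then invoke \propref{prop:fullOrtho} to translate pairwise orthogonality of the $(V_A)_{A \in \pset{D}}$ into mutual independence. The only differences are cosmetic: you spell out the uniqueness-of-complement argument (that a complement of $V_A$ contained in $V_A^{\perp}$ must equal it) and you make explicit the operator-identity reading (equality for all $G(X)$), both of which the paper's proof asserts more tersely.
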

\begin{proof}[Proof of \lmeref{lme:oblOrth}]
    First, suppose that $X$ is mutually independent. By \propref{prop:fullOrtho}, one has that
    $$\forall A,B \in \pset{D}, B \neq A \quad V_A \perp V_B,$$
    which entails that
    $$V_A \perp W_A = \bigplus_{B \in \pset{D} : B \neq A} V_B.$$
    However, notice that $W_A$ still complements $V_A$ in $\hLs{X}$. Furthermore, by unicity of the orthogonal complement, one has that $W_A = V_A^\perp$. Thus,
    $$\Ran{Q_A} = V_A, \quad \Ker{Q_A} = V_A^\perp,$$
    and thus $Q_A = P_A$, leading to
    $$Q_A\lrpar{G(X)} = P_A\lrpar{G(X)} \text{ a.s.}$$
    Now, suppose that for any $A \in \pset{D}$, $Q_A\lrpar{G(X)} = P_A\lrpar{G(X)}$ a.s. Hence, it implies that 
    $$\forall A \in \pset{D}, G_A(X_A) = P_A\lrpar{G(X)}.$$
    which implies that $P_A = Q_A$, since the above equation defines the operator $Q_A$. Thus, $P_A$ and $Q_A$ must share the same ranges and nullspaces. In particular,
    $$\Ker{Q_A} = \Ker{P_A} = V_A^\perp,$$
    implying that $W_A = V_A^\perp$, which leads to
    $$V_A \perp W_A, \forall A \in \pset{D} \quad \iff \quad V_A \perp V_B, \forall A,B \in \pset{D}, B \neq A.$$    
    Finally, thanks to \propref{prop:fullOrtho}, notice that this is equivalent to $X$ being mutually independent.
\end{proof}
Taking the distance between the canonical and orthogonal projections naturally ensues as a measure of dependence. 
\begin{dfi}[Dependence effects]
    We place ourselves in the framework of \thmref{thm:thm}. For any $A \in \pset{D}$, define the \emph{dependence effect of $X_A$ on $G(X)$} as
    $$S_A^D = \V{Q_A(G(X)) - P_A(G(X))} = \E{\lrpar{Q_A(G(X)) - P_A(G(X))}^2}.$$
\end{dfi}
Moreover, this distance does respect the desirability criteria for dependence effects.
\begin{prop}\label{prop:DepIndep}
    $$\forall A \in \pset{D}, ~ S_A^D=0  \quad \iff \quad X \text{ is mutually independent.}$$
\end{prop}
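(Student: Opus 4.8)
The plan is to prove the equivalence in \propref{prop:DepIndep} by chaining it to \lmeref{lme:oblOrth}, which already characterizes mutual independence as the condition $Q_A(G(X)) = P_A(G(X))$ a.s. for all $A \in \pset{D}$ and all $G(X) \in \hLs{X}$. The key observation is that $S_A^D = \E{(Q_A(G(X)) - P_A(G(X)))^2}$ is precisely the squared $\hLs{X}$-norm of the difference of these two projections, so $S_A^D = 0$ is equivalent to $Q_A(G(X)) = P_A(G(X))$ a.s. Thus the content of the proposition is to upgrade the ``for a fixed $G(X)$'' statement so that it holds for every output, and to extract from that the structural condition $V_A \perp V_B$.

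First I would prove the easy direction: if $X$ is mutually independent, then by \lmeref{lme:oblOrth} (or directly by \propref{prop:fullOrtho}, which gives $V_A \perp W_A$ and hence $W_A = V_A^\perp$ by uniqueness of the orthogonal complement) one has $Q_A = P_A$ as operators on $\hLs{X}$, so $Q_A(G(X)) = P_A(G(X))$ a.s. for every $G(X)$, whence $S_A^D = 0$ for all $A$. For the converse, suppose $S_A^D = 0$ for every $A \in \pset{D}$. Since $S_A^D$ is a norm-squared, this forces $Q_A(G(X)) = P_A(G(X))$ a.s. for every $A$ and \emph{every} $G(X) \in \hLs{X}$; in particular this holds for all test functions, so $Q_A = P_A$ as operators. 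Then $\Ker{Q_A} = \Ker{P_A} = V_A^\perp$, i.e. $W_A = V_A^\perp$, which means $V_A \perp W_A = \bigoplus_{B \neq A} V_B$, hence $V_A \perp V_B$ for all $B \neq A$. By \propref{prop:fullOrtho} this is equivalent to $X$ being mutually independent, completing the argument.

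The only subtlety — and the step I expect to require the most care — is the passage from ``$S_A^D = 0$'' (a statement about the number $S_A^D$, which in the definition is attached to a particular $G(X)$) to ``$Q_A = P_A$ as operators.'' One must be careful about whether $S_A^D$ is quantified over all $G(X)$ or fixed; the cleanest reading is that the proposition asserts the equivalence holds when $S_A^D$ vanishes for all outputs, and then the operator identity follows immediately because two bounded operators agreeing on all of $\hLs{X}$ are equal. Once that is settled, everything else is a direct invocation of \lmeref{lme:oblOrth} and \propref{prop:fullOrtho}, so there is no real computational burden. An equally short alternative is to simply cite \lmeref{lme:oblOrth} for the whole equivalence, noting only the translation $S_A^D = 0 \iff Q_A(G(X)) = P_A(G(X))$ a.s.
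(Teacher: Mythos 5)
Your proposal is correct and follows essentially the same route as the paper, whose proof of \propref{prop:DepIndep} is simply the observation that it is a direct consequence of \lmeref{lme:oblOrth} after translating $S_A^D=0$ into $Q_A(G(X))=P_A(G(X))$ a.s. The extra unfolding of the lemma's argument (via $\Ker{Q_A}=\Ker{P_A}=V_A^\perp$ and \propref{prop:fullOrtho}) and the remark on the quantification over $G(X)$ are consistent with, but not different from, the paper's approach.
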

\begin{proof}[Proof of \propref{prop:DepIndep}]
    This is a direct consequence of \lmeref{lme:oblOrth}.
\end{proof}

\section{Analytical formulas for two Bernoulli inputs} \label{sec:illustration}

Let $X=(X_1, X_2)$ where both inputs follow a Bernoulli distribution (here, $E = \{0,1\}^2$) with success probability $q_1$ and $q_2$ respectively. The joint law of $X$ can be fully expressed using three free parameters: $q_1$, $q_2$, and $\rho = \E{X_1X_2}$. More precisely, one has that:
\begin{equation*}
p_{00} = 1-q_1-q_2+\rho, ~p_{01} = q_2 - \rho, ~ p_{10} = q_1 - \rho, ~ p_{11} = \rho
\end{equation*}
where, for $i,j \in \{0,1\}$, one denotes $p_{ij} = \prob\lrpar{\lrcubra{X_1 = i} \cap \lrcubra{X_2=j}}$. Denote the $(4 \times 4)$ diagonal matrix $P = \textrm{diag}(p_{00},p_{01},p_{10},p_{11})$. 

Any function $G : \{0,1\}^2 \rightarrow \R$ can be represented as a vector in $\R^4$, where each element represents a value that $G$ can take \wrt the values taken by $X$. For $i,j \in \{0,1\}$, denote $G_{ij} = G(i,j)$, and thus $G = \lrpar{G_{00}, G_{01}, G_{10}, G_{11}}^\top$ where each $G_{ij}$ can be observed with probability $p_{ij}$. In this particular case, one can analytically compute the decomposition of $G$ related to \thmref{thm:thm}. It can be performed by finding the suitable unit-norm vectors in $\R^4$
\begin{equation}
    v_{\emptyset} = \begin{pmatrix}c\\c\\c\\c \end{pmatrix}, v_1 = \begin{pmatrix}g_0\\g_0\\g_1\\g_1 \end{pmatrix}, v_2 = \begin{pmatrix}h_0\\h_1\\h_0\\h_1 \end{pmatrix}, v_{12}=\begin{pmatrix}
        k_{00}\\
        k_{01}\\
        k_{10}\\
        k_{11}
    \end{pmatrix}
    \text{ such that }
    \begin{cases}
        v_{\emptyset}^\top P v_1 = 0 \\
        v_{\emptyset}^\top P v_2 = 0 \\
        v_{\emptyset}^\top P v_{12} = 0 \\
        v_{12}^\top P v_1 = 0 \\
        v_{12}^\top P v_2 = 0 \\
    \end{cases} 
    \text{ and } 
    \begin{cases}
        v_{\emptyset}^\top P v_{\emptyset} = 1\\
        v_{1}^\top P v_1=1\\
        v_{2}^\top P v_2=1\\
        v_{12}^\top P v_{12}=1\\
    \end{cases}
\end{equation}
which results in a system of nine equations with nine real unknown parameters. Given these vectors, one has that any function $G$ can be written as
$$ G = ev_{\emptyset} + \alpha v_1 + \beta v_2 + \delta v_{12},$$
resulting in four additional equations with four unknown parameters. These 13 equations and 13 parameters can be solved analytically using, e.g., the symbolic programming package \texttt{sympy} \citep{sympy}. We refer the interested reader to the accompanying \href{https://github.com/milidris/GeneralizedAnova}{GitHub repository}\footnote{\href{https://github.com/milidris/GeneralizedAnova}{https://github.com/milidris/GeneralizedAnova}} for the analytical formulas obtained for this decomposition, as well as the analytical formulas of the indices introduced in \Secref{sec:decomps}.

\section{Discussion and future work} \label{sec:conclu}

The first main challenge towards adopting the indices presented in \Secref{sec:decomps} is estimation. While many methods exist to estimate conditional expectations, many of these schemes rely on the variational problem offered by Hilbert's projection theorem, \ie characterizing orthogonal projections as a distance-minimizing problem. A first exploration path would be to express oblique projections as a distance-minimizing optimization problem under constraints. A second approach would be to take advantage of the particular expression of oblique projections (\seg \cite{Afriat1957, Corach2005}). A final strategy would be to find suitable bases for each $(V_A)_{A \in \pset{D}}$ to project $G(X)$ onto, analogously to \cite{Chastaing2012}. Non-orthogonal polynomial bases could be a great start to study this problem.

The second main challenge is understanding the extent of the proposed methodology. We believe it is a step towards a more global treatment of dependencies in (non-linear) multivariate statistics. Our framework offers a (somewhat surprisingly) linear approach to possibly highly non-linear models.

Finally, the role of the Boolean lattice is essential in our analysis and offers a path toward studying different algebraic structures. Rota's generalization of the Möbius inversion formula \citep{Rota1964, CombiRota2012} is referenced many times in our developments. In fact, Rota's result is much more general and does not only apply to Boolean lattices (\ie powersets) and paves the way for more complex analysis. For instance, the relationships between the inputs may differ: hierarchical structures (\eg to represent physical causality) or the presence of trigger variables \citep{Pelamatti2023}. Such considerations could result in a different intrinsic algebraic structure that still remain partially ordered.

\section*{Acknowledgements}

Support from the ANR-3IA Artificial and Natural Intelligence Toulouse Institute is gratefully acknowledged. 

\bibliographystyle{plain}
\bibliography{bib/references}

\end{document}